\newcommand{\E}{\mathbb E}
\newcommand{\R}{\mathbb{R}}
\newcommand{\N}{\mathbb{N}}
\newcommand{\C}{\mathbb{C}}
\newcommand{\Z}{\mathbb{Z}}
\renewcommand{\P}{\mathbb{P}}
\newcommand{\HHH}{\mathbf {H}}
\newcommand{\QQQ}{\mathbf {Q}}
\newcommand{\KKK}{\mathbf {K}}
\newcommand{\GGG}{\mathbf {G}}
\newcommand{\WWW}{\mathbf {W}}
\newcommand{\FFF}{\mathbf {F}}
\newcommand{\DDD}{\mathbb{D}}
\newcommand{\TTT}{\mathbb{T}}
\newcommand{\JJJ}{\mathcal{J}}
\newcommand{\MMM}{\mathfrak{M}}
\newcommand{\eps}{\varepsilon}
\newcommand{\toprobab}{\overset{P}{\underset{n\to\infty}\longrightarrow}}
\newcommand{\toas}{\overset{a.s.}{\underset{n\to\infty}\longrightarrow}}
\newcommand{\ton}{\overset{}{\underset{n\to\infty}\longrightarrow}}
\newcommand{\ind}{\mathbbm{1}}
\newcommand{\limn}{\lim_{n\to\infty}}
\renewcommand\Re{\operatorname{Re}}
\theoremstyle{plain}
\newtheorem{theorem}{Theorem}[section]
\newtheorem{lemma}[theorem]{Lemma}
\newtheorem{corollary}[theorem]{Corollary}
\newtheorem{proposition}[theorem]{Proposition}
\theoremstyle{definition}
\theoremstyle{remark}
\newtheorem{remark}[theorem]{Remark}
\newtheorem{example}[theorem]{Example}
\begin{document}

\author{Zakhar Kabluchko}
\address{Zakhar Kabluchko, Institute of Stochastics,
Ulm University,
Helmholtzstr.\ 18,
89069 Ulm, Germany}
\email{zakhar.kabluchko@uni-ulm.de}
\author{Dmitry Zaporozhets}
\address{Dmitry Zaporozhets\\
St.\ Petersburg Branch of the
Steklov Institute of Mathematics,
Fontanka Str.\ 27,
 191011 St.\ Petersburg,
Russia}
\email{zap1979@gmail.com}
\title[Universality for zeros of random analytic functions]{Universality for zeros of random analytic functions}
\keywords{Random analytic function, random polynomial, universality, empirical distribution of zeros, circular law, logarithmic potential, equilibrium measure, Legendre--Fenchel transform}
\subjclass[2010]{Primary, 30B20; secondary, 26C10, 65H04, 60G57, 60B10, 60B20}

\begin{abstract}
Let $\xi_0,\xi_1,\ldots$ be independent identically distributed (i.i.d.) random variables such that $\E \log (1+|\xi_0|)<\infty$. We consider random analytic functions of the form
$$
\GGG_n(z)=\sum_{k=0}^{\infty} \xi_k f_{k,n} z^k, 
$$
where $f_{k,n}$ are deterministic complex coefficients.
Let $\nu_n$
be the random measure assigning the same weight $1/n$ to each complex zero of $\GGG_n$. Assuming essentially that  $-\frac 1n \log f_{[tn], n}\to u(t)$ as $n\to\infty$, where $u(t)$ is some function,  we show that the measure $\nu_n$ converges weakly to some deterministic  measure which is characterized in terms of the Legendre--Fenchel transform of $u$. The limiting measure is universal, that is it does not depend on the distribution of the $\xi_k$'s.  This result is applied to several ensembles of random analytic functions including the ensembles corresponding to the three two-dimensional geometries of constant curvature. As another application, we prove a random polynomial analogue of the circular law for random matrices.
\end{abstract}
\maketitle

\section{Introduction}\label{sec:intro}
\subsection{Statement of the problem}\label{subsec:intro}
The simplest ensemble of random polynomials are the \textit{Kac polynomials} defined as
$$
\KKK_n(z)=\sum_{k=0}^n \xi_k z^k,
$$
where $\xi_0,\xi_1,\ldots$ are non-degenerate independent identically distributed (i.i.d.) random variables. The distribution of zeros of Kac polynomials has been much studied; see~\cite{hammersley,sparo_sur,arnold,shepp_vanderbei,ibr_zeit,shmerling_hochberg,hughes_nikeghbali,iz_log}. It is known that under a very mild moment condition, the complex zeros of $\KKK_n$ cluster asymptotically near the unit circle $\TTT=\{|z|=1\}$ and that the distribution of zeros is asymptotically uniform with regard to the argument. Given an analytic function $G$ which does not vanish identically,  we consider a measure $\mu_{G}$ counting the complex zeros of $G$ according to their multiplicities:
$$
\mu_{G}=\sum_{z\in \C: G(z)=0} n_G(z)\delta (z).
$$
Here, $n_{G}(z)$ is the multiplicity of the zero at $z$ and  $\delta(z)$ is the unit point mass at $z$. Then, \citet{iz_log} proved that the following two conditions are equivalent:
\begin{enumerate}
\item With probability $1$, the sequence of measures $\frac 1n \mu_{\KKK_n}$
converges as $n\to\infty$ weakly to the uniform probability distribution on $\TTT$.
\item $\E \log(1+|\xi_0|)<\infty$.
\end{enumerate}

Along with the Kac polynomials, many other remarkable ensembles of random polynomials (or, more generally, random analytic functions) appeared in the literature. These ensembles are usually characterized by  invariance properties with respect to certain groups of transformations and have the general form
$$
\GGG_n(z)=\sum_{k=0}^{\infty} \xi_k f_{k,n}z^k,
$$
where $\xi_0,\xi_1,\ldots$ are i.i.d.\ random variables and $f_{k,n}$ are complex deterministic coefficients.
The aim of the present work is to study the distribution of zeros of $\GGG_n$ asymptotically as $n\to\infty$. More precisely, we will show that under broad assumptions on the coefficients $f_{k,n}$, the random measure $\frac 1n \mu_{\GGG_n}$ converges, as $n\to\infty$, to some limiting deterministic measure $\mu$. The limiting measure $\mu$ is \textit{universal}, that is it does not depend on the distribution of the random variables $\xi_k$; see Figure~\ref{fig:weyl_universality}.
Universality has been much studied in the context of random matrices; see, e.g., \cite{tao_vu}. The literature on random polynomials and random analytic functions usually concentrates on the Gaussian case, since in this case explicit calculations are possible; see, e.g., \cite{hammersley,edelman_kostlan,peres_book,sodin_tsirelson,shepp_vanderbei,schiffman_zelditch_bundles,shiffman_zelditch1,bloom_shiffman,sodin_ECM,farahmand_book,bharucha_reid_book}. The only ensemble of random polynomials for which universality is well-understood is the Kac ensemble; see~\cite{sparo_sur,arnold,ibr_zeit,iz_log}.
In the context of random polynomials there have been a number of results on the \textit{local universality} in the distribution of zeros~\cite{bleher_di,ledoan_etal,schiffman_zelditch_bundles,shiffman_zelditch1}. For example, universal character of local correlations between close zeros has been demonstrated for some models. In this work, our focus is different: we prove the universality of the distribution of zeros on the \textit{global scale}.



The paper is organized as follows. In Sections~\ref{subsec:invariant}--\ref{subsec:theta} we state our results for a number of concrete ensembles of random analytic functions. These results are special cases of the general Theorem~\ref{theo:general} whose statement, due to its technicality,  is postponed to Section~\ref{subsec:general_theo}. Proofs are given in Sections~\ref{sec:proofs_special_cases}, \ref{sec:proof_log_asympt}, \ref{sec:proof_general}.

\subsection{Notation}\label{subsec:notation}
Let $\DDD_r=\{z\in\C: |z|<r\}$ be the open disk with radius $r>0$ centered at the origin. Let $\DDD=\DDD_1$ be the unit disk.  Denote by $\lambda$ the Lebesgue measure on $\C$.
A Borel measure $\mu$ on a Polish space  $X$ is called \textit{locally finite} (l.f.) if $\mu(A)<\infty$ for every compact set $A\subset X$. 
A sequence $\mu_n$ of l.f.\ measures on $X$ converges \textit{vaguely} to a l.f.\ measure $\mu$ if for every continuous, compactly supported  function $\varphi:X\to\R$,
\begin{equation}\label{eq:vague_conv_meas}
\int_{X} \varphi(z) \mu_n(dz) \ton \int_{X} \varphi(z) \mu(dz).
\end{equation}
If $\mu_n$ and $\mu$ are probability measures, the vague convergence is equivalent to the more familiar weak convergence for which~\eqref{eq:vague_conv_meas} is required to hold for all continuous, bounded $\varphi$.
Let $\MMM(X)$ be the space of all l.f.\ measures on $X$ endowed with the vague topology. A \textit{random measure} on $X$ is a random element with values in $\MMM(X)$.
A sequence of random measures $\mu_n$ converges to a random measure $\mu$ \textit{in probability} (respectively, \textit{a.s.}), if~\eqref{eq:vague_conv_meas} holds in probability (respectively, a.s.) for every continuous, compactly supported function $\varphi$.

\begin{figure}[t]
\includegraphics[height=0.49\textwidth, width=0.49\textwidth]{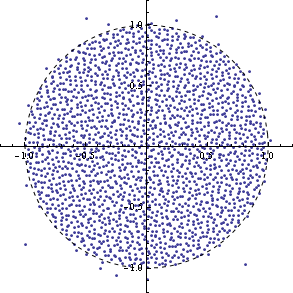}
\includegraphics[height=0.49\textwidth, width=0.49\textwidth]{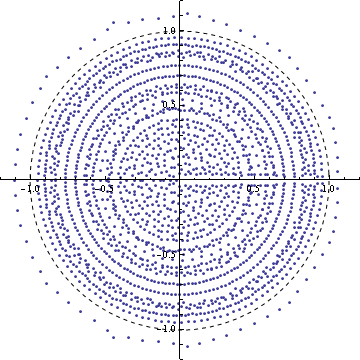}
\caption
{Zeros of the Weyl random polynomial $\WWW_n(z)=\sum_{k=0}^n \xi_k \frac{z^k}{\sqrt {k!}}$ of degree $n=2000$. The zeros were divided by $\sqrt n$. Left: complex normal coefficients.  Right: coefficients are real with $\P[\log |\xi_k|>t]=t^{-4}$ for $t>1$.
In both cases, the limiting distribution of zeros is uniform on the unit disk.
}\label{fig:weyl_universality}
\end{figure}

\section{Statement of results}\label{sec:statement_of_results}
\subsection{The three invariant ensembles}\label{subsec:invariant}
Let $\xi_0,\xi_1,\ldots$ be non-degenerate i.i.d.\ random variables. 
Fix a parameter $\alpha>0$.
We start by considering the following three ensembles of random analytic functions (see, e.g., \cite{sodin_tsirelson,peres_book} and Figure~\ref{fig:LO_roots}):
\begin{equation*}
\FFF_n(z)
=
\begin{cases}
\sum_{k=0}^n \xi_k \left(\frac{n(n-1)\ldots (n-k+1)}{k!}\right)^{\alpha}  z^k,
&(\text{elliptic, } n\in\N, z\in\C),\\
\sum_{k=0}^{\infty} \xi_k \left(\frac{n^k}{k!}\right)^{\alpha} z^k,
&(\text{flat, } n>0, z\in\C),\\
\sum_{k=0}^{\infty} \xi_k \left(\frac{n(n+1)\ldots (n+k-1)}{k!}\right)^{\alpha} z^k,
&(\text{hyperbolic, } n>0, z\in\DDD).
\end{cases}
\end{equation*}
Note that in the elliptic case $\FFF_n$ is a random polynomial of degree $n$, in the flat case it is a random entire function, whereas in the hyperbolic case it is a random analytic function defined on the unit disk $\DDD$.

In the particular case when $\alpha=1/2$ and $\xi_k$ are complex standard Gaussian with density $z\mapsto \pi^{-1} \exp\{-|z|^2\}$ on $\C$, the zero sets of these analytic functions possess remarkable invariance properties~\cite{sodin_tsirelson,peres_book}. Namely, in the flat case, the law of the zero set of $\FFF_n$ is invariant with respect to the rigid motions of $\C$. In the elliptic (resp., hyperbolic) case, the law of the zero set of $\FFF_n$ is invariant with respect to the isometries of the Riemann sphere $\bar \C$ (resp., the unit disk $\DDD$) preserving the spherical metric of constant positive curvature (resp., the hyperbolic metric of constant negative curvature).
Thus, the three ensembles correspond to the three two-dimensional geometries of constant curvature.
If $\alpha=1/2$ and $\xi_k$ are complex Gaussian, the expected number of zeros of $\FFF_n$ in a Borel set $B$ can be computed exactly:
\begin{equation*}
\E [\mu_{\FFF_n}(B)]
=
\begin{cases}
\frac n{\pi} \int_B (1+ |z|^{2})^{-2} \lambda(dz),
& (\text{elliptic case, } B\subset \C), \\
\frac n{\pi} \lambda(B),
& (\text{flat case, } B\subset \C), \\
\frac n{\pi} \int_B (1-|z|^{2})^{-2}\lambda(dz),
& (\text{hyperbolic case, } B\subset \DDD).
\end{cases}
\end{equation*}
The next theorem states the universality for the distribution of zeros of $\FFF_n$.
\begin{theorem}\label{theo:invariant}
Let $\xi_0,\xi_1,\ldots$ be non-degenerate i.i.d.\ random variables such that $\E \log (1+|\xi_0|)<\infty$. As $n\to\infty$, the  sequence of random measures $\frac 1n \mu_{\FFF_n}$ converges in probability to the deterministic  measure having a density $\rho_{\alpha}$ with respect to the Lebesgue measure, where
\begin{equation*}
\rho_{\alpha}(z)
=
\begin{cases}
\frac 1{2\pi\alpha}|z|^{\frac 1 {\alpha} - 2} (1+ |z|^{\frac 1{\alpha}})^{-2},
&(\text{elliptic case, } z\in\C),\\
\frac 1{2\pi \alpha} |z|^{\frac 1 {\alpha}-2},
&(\text{flat case, } z\in\C),\\
\frac 1 {2\pi \alpha} |z|^{\frac 1 {\alpha} - 2} (1-|z|^{\frac 1 {\alpha}})^{-2},
&(\text{hyperbolic case, } z\in\DDD).
\end{cases}
\end{equation*}
\end{theorem}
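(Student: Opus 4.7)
The plan is to identify Theorem~\ref{theo:invariant} as an instance of the general Theorem~\ref{theo:general}, which says that if $-\frac{1}{n}\log f_{[tn],n}$ converges to some function $u(t)$, then $\frac1n\mu_{\GGG_n}$ converges in probability to the measure whose logarithmic potential is the Legendre--Fenchel transform $u^*$ of $u$, evaluated at $\log|z|$. Once this is set up, the density is recovered by taking the distributional Laplacian: for a radial function $h(z)=g(\log|z|)$ one checks $\Delta h(z) = g''(\log|z|)/|z|^2$, so the limiting density equals
\begin{equation*}
\rho_\alpha(z)\;=\;\frac{1}{2\pi}\,\frac{(u^*)''(\log|z|)}{|z|^2}.
\end{equation*}

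The first and essentially routine step is to compute the large-$n$ asymptotics of the coefficients by Stirling's formula. With $k=[tn]$, in the three cases one gets
\begin{align*}
u_{\rm ell}(t) &= \alpha\bigl(t\log t + (1-t)\log(1-t)\bigr),\qquad t\in[0,1],\\
u_{\rm fl}(t) &= \alpha\bigl(t\log t - t\bigr),\qquad t\ge 0,\\
u_{\rm hyp}(t) &= -\alpha\bigl((1+t)\log(1+t) - t\log t\bigr),\qquad t\ge 0,
\end{align*}
extended by $+\infty$ outside the natural range (so that, e.g., in the elliptic case only $t\in[0,1]$ contribute, matching the fact that $\FFF_n$ is a polynomial of degree $n$). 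One must verify that the convergence is uniform in the sense required by the hypotheses of Theorem~\ref{theo:general} (plus the integrability hypothesis $\E\log(1+|\xi_0|)<\infty$, which is already assumed); this is a standard Stirling estimate together with the observation that the sequences $-\frac{1}{n}\log f_{k,n}$ are piecewise convex/smooth in $k/n$.

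The second step is an explicit Legendre--Fenchel computation. Setting $s=u'(t)$ and solving for $t$ gives, after a short calculation,
\begin{align*}
u^*_{\rm ell}(s) &= \alpha\log\!\bigl(1+e^{s/\alpha}\bigr),\\
u^*_{\rm fl}(s) &= \alpha\,e^{s/\alpha},\\
u^*_{\rm hyp}(s) &= -\alpha\log\!\bigl(1-e^{s/\alpha}\bigr)\quad (s<0,\ +\infty\text{ otherwise}).
\end{align*}
Differentiating twice in $s$ and substituting $s=\log|z|$ (so that $e^{s/\alpha}=|z|^{1/\alpha}$) produces, via the radial Laplacian identity above, exactly the three densities $\rho_\alpha$ claimed in the theorem. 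The support constraint in the hyperbolic case ($|z|<1$) is automatic from the domain of $u^*_{\rm hyp}$; in the elliptic case the density is defined on all of $\C$ and integrates to one, while in the flat case the density is a power law on $\C$ that does not integrate to one on all of $\C$ but does integrate to one over $\C$ when paired with the normalization given by the growth of $\FFF_n$ (the measure is still a probability measure because $\FFF_n$ has infinitely many zeros growing appropriately).

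The main obstacle is making sure the hypotheses of Theorem~\ref{theo:general} are actually verified in each case, rather than the algebra. In particular one must check that the convergence $-\frac{1}{n}\log f_{[tn],n}\to u(t)$ holds in the precise (likely uniform-on-compacts or pointwise with a tightness condition) sense demanded there, and that the Legendre--Fenchel framework correctly handles the boundary behavior at $t=0,1$ (elliptic) and at $t=\infty$ with $s\uparrow 0$ (hyperbolic). Once this is set, the derivation of $\rho_\alpha$ reduces to the short computations above.
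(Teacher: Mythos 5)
Your proposal is correct and follows essentially the same route as the paper: verify the hypotheses of Theorem~\ref{theo:general} via Stirling's formula, compute $u(t)$ in the three cases, take the Legendre--Fenchel transform $I=u^*$, and read off the density by the radial Laplacian $\rho(z)=I''(\log|z|)/(2\pi|z|^2)$. One small correction to a side remark: in the flat and hyperbolic cases the limit $\mu$ is an infinite (locally finite) measure, not a probability measure — there is no normalization that makes it integrate to one — whereas only the elliptic case gives a probability measure since $\FFF_n$ is there a degree-$n$ polynomial.
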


\begin{figure}[t]
\includegraphics[height=0.49\textwidth, width=0.49\textwidth]{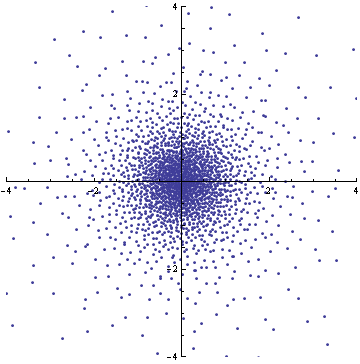}
\includegraphics[height=0.49\textwidth, width=0.49\textwidth]{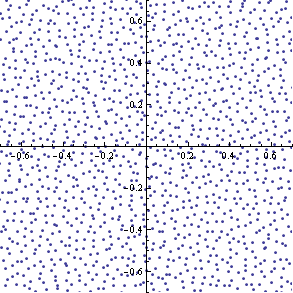}
\caption
{
Zeros in the elliptic (left) and flat (right) models with $\alpha=1/2$ and $n=2000$. The coefficients are complex normal.
} \label{fig:LO_roots}
\end{figure}

\begin{figure}[t]
\includegraphics[height=0.49\textwidth, width=0.49\textwidth]{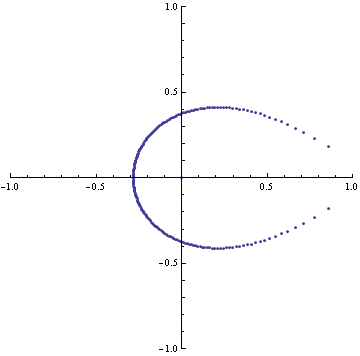}
\includegraphics[height=0.49\textwidth, width=0.49\textwidth]{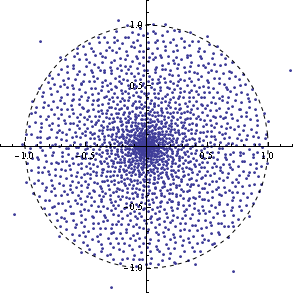}
\caption
{Left: Zeros of the Szeg\H{o} polynomial $s_n(z)=\sum_{k=0}^n \frac {z^k}{k!}$ of degree $n=200$.  Right: Zeros of the Littlewood--Offord random polynomial $\WWW_n(z)=\sum_{k=0}^n \xi_k \frac{z^k}{k!}$ of degree $n=2000$ with complex normal coefficients. The zeros were divided by $n$.
}\label{fig:szegoe}
\end{figure}

\subsection{Littlewood--Offord random polynomials}\label{subsec:LO_poly}
Next we consider an ensemble of random polynomials which was introduced by~\citet{littlewood_offord1,littlewood_offord2}. It is related to the flat model. First we give some motivation.
Let $\xi_0,\xi_1,\ldots$ be non-degenerate i.i.d.\ random variables.
Given a complex sequence $w_0,w_1,\ldots$ consider a random polynomial $\WWW_n$ defined by
\begin{equation}\label{eq:def_Qn_LO}
\WWW_n(z)=\sum_{k=0}^n \xi_k w_k  z^k.
\end{equation}
For $w_k=1$ we recover the Kac polynomials, for which the zeros concentrate near the unit circle. The next result shows that the structure of the zeros does not differ essentially from the Kac case if  the sequence $w_k$ grows or decays not too fast.
\begin{theorem}\label{theo:kac_generalized}
Let $\xi_0,\xi_1,\ldots$ be non-degenerate i.i.d.\ random variables such that $\E \log (1+|\xi_0|)<\infty$.
If $\lim_{k\to\infty} \frac 1k \log  |w_k| = w$ for some constant $w\in\R$, then the sequence of random measures $\frac 1n \mu_{\WWW_n}$ converges in probability to the uniform probability distribution on the circle of radius $e^{-w}$ centered at the origin.
\end{theorem}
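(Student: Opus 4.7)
The strategy is to deduce Theorem~\ref{theo:kac_generalized} as a direct application of the general Theorem~\ref{theo:general}. First, I would identify the coefficients: here $f_{k,n}=w_k$ for $0\le k\le n$ and $f_{k,n}=0$ for $k>n$, so they do not depend on $n$. The hypothesis $\tfrac{1}{k}\log|w_k|\to w$ then yields
$$
-\frac{1}{n}\log|w_{[tn]}|=-\frac{[tn]}{n}\cdot\frac{\log|w_{[tn]}|}{[tn]}\ton -tw=:u(t),\qquad t\in(0,1],
$$
with $u(0)=0$ by continuous extension and $u(t)=+\infty$ for $t>1$ to encode the degree restriction $\deg\WWW_n\le n$.

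Next, I would compute the Legendre--Fenchel transform of $u$ evaluated at $s=\log|z|$:
$$
u^{*}(s)=\sup_{t\in[0,1]}\bigl(st-u(t)\bigr)=\sup_{t\in[0,1]}t(s+w)=\max(0,\,s+w),
$$
and set $L(z):=u^{*}(\log|z|)=\max\bigl(0,\log|z|+w\bigr)$. Let $\sigma_w$ denote the uniform probability measure on the circle $\{|z|=e^{-w}\}$. The standard formula for the logarithmic potential of a uniform measure on a circle gives
$$
\int_{\C}\log|z-\zeta|\,\sigma_w(d\zeta)=\max(\log|z|,-w)=L(z)-w,
$$
so $L$ coincides with that logarithmic potential up to an additive constant. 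Since $\Delta\log|\cdot-\zeta|=2\pi\delta_{\zeta}$ in the distributional sense, one obtains $\frac{1}{2\pi}\Delta L=\sigma_w$. Theorem~\ref{theo:general}, which identifies the limit of $\frac{1}{n}\mu_{\WWW_n}$ with the normalized distributional Laplacian of $L$, then yields the claimed uniform distribution on the circle of radius $e^{-w}$.

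The main obstacle lies in verifying the technical regularity hypotheses of Theorem~\ref{theo:general}, which presumably demand locally uniform convergence of $-\frac{1}{n}\log|f_{[tn],n}|$ on $(0,1)$ together with appropriate control at the endpoints $t=0$ and $t=1$. The linearity of $u$ on $[0,1]$ makes the uniform convergence essentially automatic from the pointwise hypothesis (strengthened monotonicity arguments upgrade convergence of $\tfrac1k\log|w_k|$ to uniform-in-$t$ convergence), but the jump to $+\infty$ at $t=1$ must be handled with care: it is exactly this jump that forces the Legendre transform to achieve its supremum at the endpoint $t=1$ over the region $\{s+w>0\}$, thereby concentrating the limiting measure on a single circle rather than spreading it on an annulus. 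The moment condition $\E\log(1+|\xi_0|)<\infty$ is the standard integrability hypothesis needed for the random contribution from the $\xi_k$'s to be absorbed in the $o(n)$ term of $\log|\WWW_n|$, which is what permits replacing $|\WWW_n|$ by $e^{nL}$ asymptotically.
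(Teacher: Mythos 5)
Your proposal is correct and follows essentially the same route as the paper: verify assumptions (A1)--(A4) with $u$ affine on $[0,1]$ and $+\infty$ beyond, compute the Legendre--Fenchel transform $I(s)=\max(0,s+w)$, and invoke Theorem~\ref{theo:general}. The paper first reduces to $w=0$ by the scaling $\WWW_n(e^{-w}z)$ and reads off the limit directly from $\mu(\DDD_r)=I'(\log r)$, whereas you keep general $w$ and identify $\mu$ via $\frac{1}{2\pi}\Delta(I(\log|z|))$ and the known logarithmic potential of a uniform circular measure; these are equivalent presentations of the same argument.
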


We would like to construct examples where there is no concentration near a circle.
Clearly, the sequence $\log |w_k|$ has to grow or decay superlinearly. We will consider decaying sequences, since for growing sequences the zeros concentrate near the origin.  At a first attempt, it is natural to look at the case in which  $\log |w_k|$ is a multiple of $-k\log k$. More precisely, we make the following assumption on the sequence $w_k$:
\begin{equation}\label{eq:fk_asympt}
\log  |w_k| = - \alpha (k\log k - k) - \beta k +o(k), \qquad k\to\infty,
\end{equation}
where $\alpha>0$ and $\beta\in\R$ are parameters. Particular cases are polynomials of the form
$$
\WWW_n^{(1)}(z)=\sum_{k=0}^n  \frac{\xi_k}{(k!)^{\alpha}} z^k
,\;\;\;
\WWW_n^{(2)}(z)=\sum_{k=0}^n  \frac{\xi_k}{k^{\alpha k}} z^k
,\;\;\;
\WWW_n^{(3)}(z)=\sum_{k=0}^n  \frac{\xi_k}{\Gamma(\alpha k+1)} z^k.
$$
The family $\WWW_n^{(1)}$ has been studied by~\citet{littlewood_offord1,littlewood_offord2} in one of the earliest works on random polynomials. They were interested in the number of real zeros. 
In the next theorem we describe the limiting distribution of complex zeros of $\WWW_n$.
\begin{theorem}\label{theo:LO_poly}
Let $\xi_0,\xi_1,\ldots$ be non-degenerate i.i.d.\ random variables such that $\E \log (1+|\xi_0|)<\infty$. Let $w_0,w_1,\ldots$ be a complex sequence satisfying~\eqref{eq:fk_asympt}. With probability $1$,  the sequence of random  measures $\frac 1n \mu_{\WWW_n}(e^{\beta} n^{\alpha}\cdot)$
converges to the deterministic probability measure having the density
\begin{equation}\label{eq:LO_poly_density}
z\mapsto \frac 1{2\pi \alpha} |z|^{\frac 1 {\alpha}-2}\ind_{z\in\DDD}
\end{equation}
with respect to the Lebesgue measure on $\C$.
\end{theorem}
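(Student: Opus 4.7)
The plan is to apply the general Theorem~\ref{theo:general} to the rescaled polynomial
$$
\tilde{\WWW}_n(z) := \WWW_n(e^{\beta}n^{\alpha}z) = \sum_{k=0}^n \xi_k f_{k,n} z^k, \qquad f_{k,n} := w_k e^{\beta k}n^{\alpha k}.
$$
Zeros of $\tilde{\WWW}_n$ are the zeros of $\WWW_n$ divided by $e^{\beta} n^{\alpha}$, so $\frac 1n \mu_{\tilde{\WWW}_n} = \frac 1n \mu_{\WWW_n}(e^{\beta} n^{\alpha} \cdot)$, and it suffices to analyze the zero measure of $\tilde{\WWW}_n$.

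Next I would verify the log-coefficient profile. Using~\eqref{eq:fk_asympt} together with $[tn]\log[tn] = tn\log n + tn\log t + o(n)$ for fixed $t \in (0,1]$, the $\beta$-linear and the $\alpha\log n$-linear contributions cancel and one obtains
$$
\log|f_{[tn],n}| = -\alpha tn \log t + \alpha tn + o(n),
$$
so that $-\frac 1 n \log|f_{[tn],n}| \to u(t) := \alpha t(\log t - 1)$, with $u(0)=0$ and $u(t) = +\infty$ for $t>1$. The Legendre--Fenchel transform $u^*(s) = \sup_{t \in [0,1]}(ts - u(t))$ is found from the stationary equation $s = \alpha \log t^*$; the critical point $t^* = e^{s/\alpha}$ lies in $[0,1]$ precisely when $s \leq 0$. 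Hence
$$
u^*(s) = \begin{cases} \alpha e^{s/\alpha}, & s \leq 0, \\ s + \alpha, & s \geq 0. \end{cases}
$$

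By Theorem~\ref{theo:general}, the limiting measure $\mu$ has logarithmic potential $V(z) = u^*(\log|z|)$. Since $V$ is radial and $C^1$ across $|z|=1$ (the one-sided derivatives of $u^*$ at $0$ both equal $1$), the Laplacian has no singular contribution on $\TTT$. Inside $\DDD$, $V(z) = \alpha |z|^{1/\alpha}$ so $\Delta V(z) = (1/\alpha)|z|^{1/\alpha-2}$ by the radial formula $\Delta = \partial_r^2 + r^{-1}\partial_r$; outside $\DDD$, $V(z) = \log|z| + \alpha$ is harmonic. Dividing by $2\pi$ yields exactly the density~\eqref{eq:LO_poly_density}, which integrates to $1$ over $\DDD$ as it must since $\tilde{\WWW}_n$ has degree $n$ a.s.

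The main obstacle is upgrading the convergence in probability furnished by the general theorem to the almost sure convergence asserted here. The key feature I would exploit is that, unlike the elliptic or flat ensembles in Theorem~\ref{theo:invariant}, the coefficients $w_k$ do not depend on $n$, so the entire family $\{\WWW_n\}_{n\geq 1}$ is built from a single sequence $(\xi_k)_{k\geq 0}$ on one probability space. My plan is to extract from the proof of Theorem~\ref{theo:general} a quantitative deviation bound for $\int \varphi\, d(\frac 1 n \mu_{\tilde{\WWW}_n})$ for fixed $\varphi \in C_c(\C)$, apply Borel--Cantelli along an arithmetic subsequence $n_j$, and then control the fluctuation between $\WWW_{n_{j}}$ and $\WWW_{n_{j+1}}$ by comparing them via the small additional tail $\sum_{k=n_j+1}^{n_{j+1}} \xi_k w_k z^k$; separability of $C_c(\C)$ then produces simultaneous a.s.\ vague convergence as measures.
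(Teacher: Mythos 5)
Your computation of $u$ and its Legendre--Fenchel transform is correct and matches what the paper does in Section~\ref{sec:proofs_special_cases}, and your observation that the Littlewood--Offord family has a nested structure (the coefficients $w_k$ do not depend on $n$, so $\WWW_{n+1}$ is obtained from $\WWW_n$ by adding one term) is exactly the feature the paper exploits in Section~\ref{subsec:proof_LO_poly_as} to upgrade from convergence in probability to a.s.\ convergence. The skeleton of your plan --- Borel--Cantelli along a subsequence plus a tail comparison between consecutive indices --- is also the paper's strategy.

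The gap is in the phrase \emph{``apply Borel--Cantelli along an arithmetic subsequence $n_j$.''} The only quantitative input available from the proof of Theorem~\ref{theo:general} is the Kolmogorov--Rogozin anti-concentration bound~\eqref{eq:proof_log_part_func_2}, which gives a probability of order $O(n^{-1/2})$ for the lower bound on $\frac 1n\log|\GGG_n(z)|$ to fail. Along an arithmetic progression $n_j = c j$ one has $\sum_j n_j^{-1/2}=\infty$, so Borel--Cantelli does not apply. One needs a sufficiently sparse subsequence; the paper takes $n_j = j^3$, for which $\sum_j j^{-3/2}<\infty$, and then interpolates between $m_n := j_n^3$ and $n$ by controlling $\sum_{k=m_n+1}^n \xi_k w_k z^k$. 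A related subtlety you do not address: the paper fixes the rescaling factor to $e^\beta m_n^\alpha$ (constant on each block $[j^3,(j+1)^3)$) precisely so that $\GGG_{m_n}$ and $\GGG_n$ agree on the common coefficients and differ only by the tail; if you rescale by $e^\beta n^\alpha$ as in your set-up, every coefficient changes between $n_j$ and $n_{j+1}$, and the comparison is no longer a clean tail estimate.

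Two further points where your plan is weaker than what is actually needed. First, the paper does not try to get deviation bounds for $\int\varphi\,d(\frac 1n\mu_{\GGG_n})$ directly: it proves a.s.\ \emph{pointwise} convergence of the log-potential $p_n(z)=\frac 1n\log|\GGG_n(z)|$ and then passes to the measures via the Tao--Vu dominated convergence Lemma~\ref{lem:tao_vu}. That lemma requires showing $\limsup_n \tilde B_n<\infty$ a.s.\ with $\tilde B_n$ as in~\eqref{eq:def_tilde_Bn}, which is handled via the Poisson--Jensen formula and a zero-counting argument; this step is nontrivial and cannot be skipped. Second, the tail comparison only yields the a.s.\ lower bound on $p_n(z)$ for $z\in\DDD$ (since it relies on $|z|<1$ to make the extra terms small), so the paper needs a separate argument, using the fact that the limiting measure already has full mass $1$ on $\DDD$, to handle test functions supported outside $\DDD$. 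Your sketch does not mention this, but it is needed to conclude.
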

\begin{remark}
The measure $\mu_{\WWW_n}(e^{\beta} n^{\alpha}\cdot)$ counts points of the form $\frac {z}{e^{\beta} n^{\alpha}}$, where $z$ is a zero of $\WWW_n$.
\end{remark}
For the so-called \textit{Weyl random polynomials} $\WWW_n(z)=\sum_{k=0}^n \xi_k \frac{z^k}{\sqrt {k!}}$ having $\alpha=1/2$ and $\beta=0$, the limiting distribution is uniform on $\DDD$; see Figure~\ref{fig:weyl_universality}. This result can be seen as an analogue of the famous circular law for the distribution of eigenvalues of the non-Hermitian random matrices with i.i.d.\ entries~\cite{tao_vu,bordenave_chafai}.  \citet{forrester_honner} stated the circular law  for Weyl polynomials and discussed the differences and similarities between the matrix and the polynomial cases; see also~\cite{krishnapur_virag}. The analogy between the random matrices with i.i.d.\ entries and the Weyl polynomials is not merely the coincidence of the limiting distributions. Both models are closely connected to the logarithmic potential theory on $\C$ with external field $\frac 12 |z|^2$; see Section~\ref{subsec:log_potential} for more details.


Under a minor additional assumption on the coefficients $w_k$ we can prove that the logarithmic moment condition is not only sufficient, but also necessary for the a.s.\ convergence of the empirical distribution of zeros. It is easy to check that the additional assumption is satisfied for $\WWW_n=\WWW_n^{(i)}$ with $i=1,2,3$.
\begin{theorem}\label{theo:LO_poly_converse}
Let $\xi_0,\xi_1,\ldots$ be non-degenerate i.i.d.\ random variables. Let $w_0,w_1,\ldots$ be a complex sequence satisfying~\eqref{eq:fk_asympt}  and  such that for some $C>0$,
\begin{equation}\label{eq:LO_additional_assumpt}
|w_{n-k}/w_n|< C e^{\beta k} n^{\alpha k} \text{ for all } n\in\N, k\leq n.
\end{equation}
Then, the following are equivalent:
\begin{enumerate}
\item With probability $1$, the sequence of random measures $\frac 1n \mu_{\WWW_n}(e^{\beta}n^{\alpha}\cdot)$
converges to the probability measure on $\DDD$ with density~\eqref{eq:LO_poly_density}.
\item $\E \log (1+|\xi_0|)<\infty$.
\end{enumerate}
\end{theorem}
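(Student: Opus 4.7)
The implication $(2)\Rightarrow(1)$ is exactly Theorem~\ref{theo:LO_poly}, so the substance lies in the converse $(1)\Rightarrow(2)$, which I prove by contrapositive. Assume $\E\log(1+|\xi_0|)=\infty$; I will show that the a.s.\ convergence in~(1) must fail. Since the $\xi_n$'s are i.i.d., for every $A>0$ we have $\sum_n\P[|\xi_n|>e^{An}]=\sum_n\P[|\xi_0|>e^{An}]=\infty$, so the second Borel--Cantelli lemma yields $\limsup_n\frac{1}{n}\log|\xi_n|=\infty$ a.s. A short record-time argument, exploiting that the running maximum $M_n=\max_{k\leq n}|\xi_k|$ only jumps up at record epochs while $\log M_n/n$ decays strictly between them, extracts a subsequence $(n_j)$ along which simultaneously $|\xi_{n_j}|=M_{n_j}$ and $\frac{1}{n_j}\log|\xi_{n_j}|\to\infty$.

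Set $P_n(z):=\WWW_n(e^\beta n^\alpha z)=\sum_{k=0}^n a_{k,n}z^k$ with $a_{k,n}=\xi_k w_k e^{\beta k}n^{\alpha k}$, and let $\tilde z_1,\ldots,\tilde z_n$ denote its zeros (so $\tilde z_i=z_i/(e^\beta n^\alpha)$). Vieta's formula $\prod_i\tilde z_i=(-1)^n a_{0,n}/a_{n,n}$, combined with the coefficient asymptotic~\eqref{eq:fk_asympt}, gives on the positive-probability event $\{\xi_0\neq 0\}$
\begin{equation*}
\frac{1}{n}\sum_{i=1}^n\log|\tilde z_i|=-\frac{1}{n}\log|\xi_n|-\alpha+o(1)\quad\text{a.s.},
\end{equation*}
so that $\frac{1}{n_j}\sum_i\log|\tilde z_i|\to-\infty$ along the subsequence above. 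Meanwhile, assumption~\eqref{eq:LO_additional_assumpt} is equivalent to $|a_{n-k,n}/a_{n,n}|\leq C|\xi_{n-k}/\xi_n|$ for $k\leq n$, so Cauchy's classical bound on polynomial roots yields $|\tilde z_i|\leq 1+C\max_{k\leq n}|\xi_k|/|\xi_n|$; along the record-time subsequence this uniformizes to $|\tilde z_i|\leq 1+C$ for all $i$. Assuming (1) holds, we would have $\mu_{n_j}\to\rho_\alpha\lambda$ weakly a.s., and coupled with the uniform support bound one aims to conclude $\int\log|w|\,d\mu_{n_j}(w)\to\int\log|w|\,\rho_\alpha(w)\,d\lambda(w)=-\alpha$, which directly contradicts the $-\infty$ divergence displayed above and thus refutes (1).

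The main obstacle lies in this last deduction: weak convergence of the empirical zero measures together with a uniform support bound does \emph{not} by itself guarantee convergence of $\int\log|w|\,d\mu_n$, since a single zero approaching $0$ exponentially fast can send the integral to $-\infty$ without disturbing weak convergence. Ruling out such anomalous small zeros is the delicate point, and is where~\eqref{eq:LO_additional_assumpt} is exploited a second time: the uniform bound $|a_{k,n}/a_{n,n}|\leq C|\xi_k/\xi_n|$ on \emph{all} coefficients of $P_n$, together with P\'olya's theorem on uniform convergence of distribution functions (applicable since the limiting CDF $r\mapsto r^{1/\alpha}$ is continuous on $[0,1]$), should suffice to control the number of small zeros uniformly in $n$ and to pass to the limit in the logarithmic integral, closing the argument.
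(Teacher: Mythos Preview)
Your setup for $(1)\Rightarrow(2)$ is sound: the record-time subsequence extraction, the Vieta computation giving $\frac{1}{n}\sum_i\log|\tilde z_i|=-\frac{1}{n}\log|\xi_n|-\alpha+o(1)$, and the Cauchy bound $|\tilde z_i|\leq 1+C$ along that subsequence are all correct. The problem is exactly where you locate it, and unfortunately your proposed fix does not close the gap. P\'olya's theorem would give uniform convergence of the empirical radial CDFs to $r\mapsto r^{1/\alpha}$, but this does not bound $\int\log|w|\,d\mu_{n_j}$ from below: a \emph{single} zero at, say, $e^{-n^2}$ contributes $-n$ to $\frac{1}{n}\sum_i\log|\tilde z_i|$ while leaving the CDF at every fixed positive radius essentially unaffected. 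The coefficient bounds $|a_{k,n}/a_{n,n}|\leq C|\xi_k/\xi_n|$ yield upper bounds on the elementary symmetric functions of the roots, but no lower bound on individual roots or on the number of very small roots. So the argument, as it stands, does not close.

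The paper avoids this difficulty by proving something much stronger than the Cauchy upper bound: along the record subsequence, \emph{all} zeros of $P_{n_j}$ lie in $\DDD_\eps$. This follows by summing the same coefficient estimates rather than using only their maximum. With your notation, along the subsequence one has $|a_{n-k,n}/a_{n,n}|\leq C|\xi_n|^{-k/n}$, whence for $|z|>\eps$,
\[
\sum_{k=1}^{n-1}\frac{|a_{n-k,n}z^{n-k}|}{|a_{n,n}z^{n}|}
\;\leq\; C\sum_{k\geq 1}\bigl(|\xi_{n_j}|^{1/n_j}\,|z|\bigr)^{-k}
\;<\;\tfrac{1}{3}
\]
once $|\xi_{n_j}|^{1/n_j}\eps$ is large enough, and the constant term $a_{0,n}$ is eventually dominated as well. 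Hence the leading monomial strictly dominates the rest of $P_{n_j}$ on $\{|z|>\eps\}$, so $P_{n_j}$ has no zero there. This immediately contradicts~(1): the target measure assigns mass $1-\eps^{1/\alpha}$ to $\{|z|>\eps\}$, while $\mu_{n_j}$ assigns none. No passage to the limit in an unbounded integrand is needed, and the delicate issue you identified simply does not arise.
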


It should be stressed that in all our results we assume that the random variables $\xi_k$ are non-degenerate (that is, not a.s.\ constant). To see that this assumption is essential, consider the deterministic polynomials
\begin{equation}\label{eq:szegoe_poly}
s_n(z)=\sum_{k=0}^n \frac {z^k}{k!}.
\end{equation}
A classical result of \citet{szegoe} states that the zeros of $s_n(nz)$ cluster asymptotically (as $n\to\infty$) along the curve  $\{|ze^{1-z}|=1\}\cap \DDD$; see Figure~\ref{fig:szegoe} (left). This behavior is manifestly different from the distribution with density $1/(2\pi |z|)$ on $\DDD$ we have obtained in Theorem~\ref{theo:LO_poly} for the same polynomial with randomized coefficients; see Figure~\ref{fig:szegoe} (right).

\subsection{Littlewood--Offord random entire function}\label{subsec:LO_entire}
Next we discuss a random entire function which also was introduced by~\citet{littlewood_offord_entire1,littlewood_offord_entire2err}. Their aim was to describe the properties of a ``typical'' entire function of a given order $1/\alpha$. 
Given a complex sequence $w_0,w_1,\ldots$ satisfying~\eqref{eq:fk_asympt} consider a random entire function
\begin{equation}\label{eq:def_LO_entire}
\WWW(z)=\sum_{k=0}^{\infty} \xi_k w_k  z^k.
\end{equation}
Examples are given by
$$
\WWW^{(1)}(z)=\sum_{k=0}^{\infty}  \frac{\xi_k}{(k!)^{\alpha}} z^k
,\;\;\;
\WWW^{(2)}(z)=\sum_{k=0}^{\infty}  \frac{\xi_k}{k^{\alpha k}} z^k
,\;\;\;
\WWW^{(3)}(z)=\sum_{k=0}^{\infty}  \frac{\xi_k}{\Gamma(\alpha k+1)} z^k.
$$
The first function is essentially the flat model considered above, namely $\WWW(n^{\alpha}z)=\FFF_n(z)$. For $\alpha=1$ it is a randomized version of the Taylor series for the exponential.  The last function is a randomized version of the Mittag--Leffler function.
Our aim is to describe the density of zeros of $\WWW$ on the global scale.
We have the following strengthening of the flat case of Theorem~\ref{theo:invariant}.
\begin{theorem}\label{theo:LO_entire}
Let $\xi_0,\xi_1,\ldots$ be non-degenerate i.i.d.\ random variables such that $\E \log (1+|\xi_0|)<\infty$. Let $w_0,w_1,\ldots$ be a complex sequence satisfying~\eqref{eq:fk_asympt}. With probability $1$, the random measure
$\frac 1n \mu_{\WWW}(e^{\beta} n^{\alpha}\cdot)$ converges to the deterministic measure having the density
\begin{equation}\label{eq:LO_entire_density}
z\mapsto \frac 1{2\pi \alpha} |z|^{\frac 1 {\alpha}-2}
\end{equation}
with respect to the Lebesgue measure on $\C$.
\end{theorem}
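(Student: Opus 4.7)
The plan is to reduce Theorem~\ref{theo:LO_entire} to the general Theorem~\ref{theo:general} by a rescaling analogous to the one used in Theorem~\ref{theo:LO_poly}, the essential difference being that the summation index $k$ now runs over all of $\N$ rather than being cut off at $n$, so the limiting measure is supported on all of $\C$ rather than on $\DDD$.

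First I would check that \eqref{eq:def_LO_entire} defines an entire function a.s. A Borel--Cantelli argument applied to $\E \log(1+|\xi_0|)<\infty$ yields $\limsup_k |\xi_k|^{1/k}\leq 1$ a.s., while \eqref{eq:fk_asympt} gives $|w_k|^{1/k}\to 0$ at a superexponential rate, so the radius of convergence of \eqref{eq:def_LO_entire} is a.s.\ infinite.

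Next, setting $\zeta = z/(e^\beta n^\alpha)$, view the rescaled measure $\tfrac{1}{n}\mu_\WWW(e^\beta n^\alpha\cdot)$ as the normalized zero measure of
\[
\GGG_n(\zeta) \;=\; \WWW(e^\beta n^\alpha \zeta) \;=\; \sum_{k=0}^\infty \xi_k f_{k,n}\zeta^k,\qquad f_{k,n}:=w_k\,e^{\beta k}\,n^{\alpha k}.
\]
A direct calculation using \eqref{eq:fk_asympt} gives, for $k=[tn]$ with $t>0$,
\[
-\tfrac{1}{n}\log|f_{[tn],n}| \;=\; \alpha(t\log t - t) + o(1),
\]
so Theorem~\ref{theo:general} applies with $u(t)=\alpha(t\log t-t)$ on $[0,\infty)$. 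Differentiating gives $u'(t)=\alpha\log t$, and the one-line computation $t^\ast=e^{s/\alpha}$ yields the Legendre--Fenchel transform $u^\ast(s)=\alpha e^{s/\alpha}$. At $s=\log|z|$ this equals $\alpha|z|^{1/\alpha}$; taking its distributional Laplacian (in polar coordinates) and dividing by $2\pi$ produces exactly
\[
\tfrac{1}{2\pi}\Delta\!\bigl(\alpha|z|^{1/\alpha}\bigr)\;=\;\tfrac{1}{2\pi\alpha}|z|^{1/\alpha-2},
\]
which is the density \eqref{eq:LO_entire_density}.

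Finally, to upgrade the convergence in probability furnished by the general theorem to the almost sure convergence claimed here, I would freeze a compact $\zeta$-window and truncate the sum at $k\leq 2n$: the tail $\sum_{k>2n}\xi_k f_{k,n}\zeta^k$ is uniformly negligible on that window, thanks to the superexponential decay of $|w_k|$ combined with the a.s.\ bound $|\xi_k|\leq e^{o(k)}$. On each such window the problem then reduces to the polynomial setting, where hypothesis \eqref{eq:LO_additional_assumpt} is directly verifiable from \eqref{eq:fk_asympt}, so Theorem~\ref{theo:LO_poly_converse} supplies the a.s.\ statement. The main obstacle I expect is technical rather than conceptual: making the tail estimates uniform enough to pass cleanly through the logarithmic potential $\tfrac{1}{n}\log|\GGG_n(\zeta)|$, and then converting $L^1_{\mathrm{loc}}$ convergence of potentials into vague convergence of the zero measures on the unbounded domain $\C$ (which, unlike in the polynomial case, carries infinite total mass).
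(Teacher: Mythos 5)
The convergence-in-probability half of your proposal (rescaling to $\GGG_n(\zeta)=\WWW(e^\beta n^\alpha\zeta)$, computing $u(t)=\alpha(t\log t-t)$, $I(s)=\alpha e^{s/\alpha}$, and invoking Theorem~\ref{theo:general}) is exactly what the paper does. The problem is your proposed upgrade to a.s.\ convergence: the truncation-to-a-polynomial strategy has genuine gaps and, moreover, misses the structural observation that makes the entire-function case \emph{easier}, not harder, than the polynomial one.

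First, the citation is wrong in two ways. Theorem~\ref{theo:LO_poly_converse} is a necessity converse and requires the extra hypothesis~\eqref{eq:LO_additional_assumpt}; the a.s.\ forward direction you want is Theorem~\ref{theo:LO_poly}. Also, \eqref{eq:LO_additional_assumpt} is a uniform bound over all $n,k$, and it is \emph{not} derivable from the purely asymptotic condition~\eqref{eq:fk_asympt} with its $o(k)$ error term — the paper explicitly calls it an ``additional assumption''. Second, even granting Theorem~\ref{theo:LO_poly}, the reduction is not clean: that theorem concerns $\frac1n\mu_{\WWW_n}(e^\beta n^\alpha\cdot)$ with degree and scaling parameter both equal to $n$, whereas your truncation at $k\le 2n$ with $n^\alpha$-scaling mismatches them. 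You would need a rescaled variant of the polynomial theorem, and then to argue that the zero measures of $\WWW$ and of its Taylor polynomial $\WWW_{2n}$ agree inside the growing disk $e^\beta n^\alpha\DDD_R$ up to an $o(n)$-error. A uniform $\sup$-norm smallness of the tail does not deliver this: $\log|\GGG_n|$ is unbounded near zeros, so passing from function-value bounds to potential bounds to vague convergence of zero measures is precisely the hard step, and you acknowledge it without resolving it.

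The paper's route avoids all of this with a much simpler observation, available \emph{only} in the entire-function case: the random object $\WWW$ does not depend on $n$, only the scaling does. Set $m_n=j_n^3$ with $j_n^3\le n<(j_n+1)^3$, and define $\GGG_n(z)=\WWW(e^\beta m_n^\alpha z)$. Then $\GGG_{m_n}=\GGG_n$ \emph{identically} (since $m_{m_n}=m_n$), so the $O(1/\sqrt n)$ lower-bound estimate~\eqref{eq:proof_log_part_func_2} combined with Borel--Cantelli along cubes immediately yields a.s.\ convergence of $\frac1n\log|\GGG_n(z)|$ for every fixed $z\in\C$, with no approximation of $\WWW$ by polynomials at all; the passage to zero measures then uses the a.s.\ tightness~\eqref{eq:tao_vu_cond_as} and Lemma~\ref{lem:tao_vu} as in Theorem~\ref{theo:LO_poly}. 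Since $m_n/n\to1$, replacing $m_n^\alpha$ by $n^\alpha$ in the scaling is harmless. Notice this is the opposite of what one might expect: in the polynomial case a genuine tail estimate is needed because $\WWW_n$ and $\WWW_{m_n}$ differ, whereas here there is nothing to estimate.
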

As a corollary we obtain a law of large numbers for the number of zeros of $\WWW$.
\begin{corollary}\label{cor:LO_entire_LLN}
Let $N(r)=\mu_{\WWW}(\DDD_r)$ be the number of zeros of $\WWW$ in the disk $\DDD_r$.  Under the assumptions of Theorem~\ref{theo:LO_entire},
$$
N(r) = e^{-\frac{\beta}{\alpha}} r^{\frac 1{\alpha}} (1+o(1)) \text{ a.s. as }  r\to\infty.
$$
\end{corollary}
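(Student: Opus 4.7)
The plan is to read off Corollary~\ref{cor:LO_entire_LLN} from Theorem~\ref{theo:LO_entire} by evaluating the almost surely convergent random measures $\nu_n := \frac{1}{n}\mu_{\WWW}(e^{\beta}n^{\alpha}\,\cdot\,)$ on the unit disk $\DDD$, and then using monotonicity of $N$ to fill in the gaps between the subsequence $r_n := e^{\beta}n^{\alpha}$ and an arbitrary $r\to\infty$.

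First I would note that the limiting measure $\nu$ with density $\rho(z) = \frac{1}{2\pi\alpha}|z|^{1/\alpha-2}$ is locally integrable on $\C$ and assigns zero mass to the unit circle $\partial \DDD$. A standard Portmanteau-type sandwich---squeezing $\ind_{\DDD}$ between two continuous, compactly supported functions that differ from it only on thin annuli of arbitrarily small $\nu$-mass---then upgrades the vague a.s.\ convergence $\nu_n \to \nu$ furnished by Theorem~\ref{theo:LO_entire} to
\begin{equation*}
\nu_n(\DDD)\xrightarrow[n\to\infty]{\text{a.s.}} \nu(\DDD) = \int_0^1 \frac{1}{2\pi\alpha}\, t^{1/\alpha-2}\cdot 2\pi t\, dt = 1.
\end{equation*}
Unravelling the scaling gives $\nu_n(\DDD) = \frac{1}{n}\mu_{\WWW}(\DDD_{r_n}) = \frac{1}{n}N(r_n)$, so $N(r_n)/n\to 1$ almost surely. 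Since $r_n^{1/\alpha}e^{-\beta/\alpha}=n$, this is precisely the claimed asymptotics along the subsequence $(r_n)$.

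Finally I would extend to all $r\to\infty$ by the monotonicity of $N$. For $r\geq r_1$, pick $n$ with $r_n\leq r<r_{n+1}$; then $n\leq r^{1/\alpha}e^{-\beta/\alpha}\leq n+1$ (since $t\mapsto t^{1/\alpha}e^{-\beta/\alpha}$ is increasing), and the monotonicity of $N$ yields
\begin{equation*}
\frac{N(r_n)}{n+1}\leq \frac{N(r)}{r^{1/\alpha}e^{-\beta/\alpha}}\leq \frac{N(r_{n+1})}{n}.
\end{equation*}
Both bounds tend almost surely to $1$ by the previous step together with $n/(n+1)\to 1$, and the squeeze gives the corollary. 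The only slightly delicate point in the whole argument is the upgrade from vague convergence of $\nu_n$ to convergence of $\nu_n(\DDD)$, which rests on $\nu(\partial\DDD)=0$; beyond this routine measure-theoretic step I do not anticipate a serious obstacle.
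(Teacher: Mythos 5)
Your argument is correct and is precisely the natural deduction the paper has in mind (the corollary is stated without a separate proof, being understood to follow directly from Theorem~\ref{theo:LO_entire}). The three ingredients you use — upgrading vague a.s.\ convergence to convergence of masses on $\DDD$ via $\nu(\partial\DDD)=0$, unwinding the scaling so that $\nu_n(\DDD)=N(e^{\beta}n^{\alpha})/n$, and interpolating between the subsequence $r_n=e^{\beta}n^{\alpha}$ and general $r$ by monotonicity of $N$ — are exactly what is needed, and all the computations check out.
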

In the case $\alpha=1/2$ the limiting measure in Theorem~\ref{theo:LO_entire} has constant density $1/\pi$. The difference between the limiting densities in Theorem~\ref{theo:LO_poly} and Theorem~\ref{theo:LO_entire} is that in the latter case there is no restriction to the unit circle.  In the case of Bernoulli-distributed $\xi_k$ the function~\eqref{eq:def_LO_entire} has been considered by~\citet{littlewood_offord_entire1,littlewood_offord_entire2err}. Under the assumption $\log |w_k| \sim -\alpha k \log k$ as $k\to\infty$ they proved some estimates for the distribution of zeros of $\WWW$. These estimates does not seem to imply Theorem~\ref{theo:LO_entire}, since they are true up to multiplicative constants only. Also, the estimates of Littlewood and Offord are stated in terms of some more advanced functions counting the zeros and are not easily translated to our setting.


Let us again stress the importance of the non-degeneracy assumption. The exponential function $e^z$ has no complex zeros, whereas the zeros of its randomized version $\sum_{k=0}^{\infty}\xi_k \frac{z^k}{k!}$ have the global-scale density $1/(2\pi |z|)$ on $\C$. For the absolute values of the zeros, the limiting density is constant and equal to $1$ on $(0,\infty)$.

\subsection{Randomized theta function}\label{subsec:theta}
Given a parameter $\alpha\in (0,1)\cup (1,\infty)$ we consider a random analytic function
$$
\HHH_n(z)
=
\begin{cases}
\sum_{k=0}^{\infty} \xi_k e^{n^{1-\alpha} k^{\alpha}} z^k, &(\text{case } \alpha<1, z\in\DDD),\\
\sum_{k=0}^{\infty} \xi_k e^{-n^{1-\alpha} k^{\alpha}} z^k, &(\text{case } \alpha>1, z\in\C).
\end{cases}
$$
\begin{theorem}\label{theo:stable}
Let $\xi_0,\xi_1,\ldots$ be non-degenerate i.i.d.\ random variables such that $\E \log (1+|\xi_0|)<\infty$. As $n\to\infty$, the sequence of random measures $\frac 1n \mu_{\HHH_n}$ converges in probability to the deterministic measure having the density
$$
z \mapsto
\frac{1}{2\pi \alpha|1-\alpha|} \frac 1 {|z|^2} \left|\frac {\log |z|}{\alpha}\right|^{\frac{2-\alpha}{\alpha-1}}
$$
with respect to the Lebesgue measure on $\C$. The density is restricted to  $\DDD$ in the case $\alpha<1$ and to $\C\backslash \DDD$ in the case $\alpha>1$.
\end{theorem}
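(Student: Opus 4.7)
The plan is to deduce Theorem~\ref{theo:stable} from the general Theorem~\ref{theo:general}. With coefficients $f_{k,n} = e^{\pm n^{1-\alpha}k^{\alpha}}$ (sign $+$ when $\alpha<1$, sign $-$ when $\alpha>1$), a direct computation gives
\begin{equation*}
-\frac{1}{n}\log|f_{[tn],n}| \;=\; \mp n^{-\alpha}[tn]^{\alpha} \;\longrightarrow\; \mp t^{\alpha} \;=:\; u(t),
\end{equation*}
uniformly on compact subsets of $[0,\infty)$. Thus the input function of Theorem~\ref{theo:general} is $u(t)=t^{\alpha}$ when $\alpha>1$ (with $\HHH_n$ entire) and $u(t)=-t^{\alpha}$ when $\alpha<1$ (with $\HHH_n$ analytic on $\DDD$).

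The second step is to compute the Legendre--Fenchel transform $u^{*}(s) = \sup_{t \geq 0}(st - u(t))$. For $\alpha>1$ the supremum is attained at $t=(s/\alpha)^{1/(\alpha-1)}$ whenever $s\geq 0$, giving $u^{*}(s)=(\alpha-1)(s/\alpha)^{\alpha/(\alpha-1)}$, while $u^{*}(s)=0$ for $s\leq 0$. For $\alpha<1$ one has $u^{*}(s)=+\infty$ when $s\geq 0$ (which accounts for the absence of zeros outside $\DDD$), and for $s<0$ the maximum, attained at $t=(-s/\alpha)^{1/(\alpha-1)}$, produces $u^{*}(s)=(1-\alpha)(-s/\alpha)^{\alpha/(\alpha-1)}$. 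A routine differentiation subsumes both formulas on the interior of the finiteness domain into
\begin{equation*}
(u^{*})''(s) \;=\; \frac{1}{\alpha\,|1-\alpha|}\left|\frac{s}{\alpha}\right|^{(2-\alpha)/(\alpha-1)}.
\end{equation*}

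Theorem~\ref{theo:general} then identifies the limit of $\frac{1}{n}\mu_{\HHH_n}$ as the measure with density $\frac{1}{2\pi}\Delta[u^{*}(\log|z|)]$. Using the elementary identity $\Delta V(\log|z|)=V''(\log|z|)/|z|^{2}$ for a $C^{2}$ radial function $V$, the substitutions $V=u^{*}$ and $s=\log|z|$ yield
\begin{equation*}
\rho(z) \;=\; \frac{1}{2\pi|z|^{2}}(u^{*})''(\log|z|) \;=\; \frac{1}{2\pi\alpha\,|1-\alpha|}\frac{1}{|z|^{2}}\left|\frac{\log|z|}{\alpha}\right|^{(2-\alpha)/(\alpha-1)},
\end{equation*}
exactly the formula claimed. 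The support restriction to $\DDD$ (case $\alpha<1$) or to $\C\setminus\DDD$ (case $\alpha>1$) is automatic from the region where $u^{*}$ is finite and non-affine.

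The main obstacle is verifying that the coefficients $f_{k,n}$ here actually meet the precise hypotheses of Theorem~\ref{theo:general}: in particular, that the convergence $-\frac{1}{n}\log|f_{[tn],n}|\to u(t)$ occurs in the mode the general theorem demands (locally uniformly, with suitable tail control as $t\to 0^{+}$ and $t\to\infty$, the latter being delicate in the $\alpha<1$ case where $\HHH_n$ is not a polynomial), and that the resulting $u$ satisfies the required convexity/regularity assumptions. Once this verification is carried out, the computation of $u^{*}$ and of the radial Laplacian reduces to the mechanical exercise above.
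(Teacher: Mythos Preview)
Your proposal is correct and follows essentially the same approach as the paper: identify $u(t)=\sigma t^{\alpha}$ (with $\sigma=+1$ for $\alpha>1$ and $\sigma=-1$ for $\alpha<1$), compute the Legendre--Fenchel transform $I$, and invoke Theorem~\ref{theo:general} together with the density formula~\eqref{eq:rho_formula}. Your computation is in fact slightly more explicit than the paper's, and your observation that $I(s)=0$ (rather than $+\infty$) for $s\le 0$ in the case $\alpha>1$ is the correct value; the paper's unified formula is imprecise on that half-line, though it does not affect the resulting density.
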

As the parameter $\alpha$ crosses the value $1$, the zeros of $\HHH_n$ jump from the unit disk $\DDD$ to its complement $\C\backslash \DDD$; see Figure~\ref{fig:3_circles} (left) for the case $\alpha=2$. Note that the case $\alpha=1$ corresponds formally to Kac polynomials for which the zeros are on the boundary of $\DDD$. The special case $\alpha=2$ corresponds to the randomized theta function
\begin{equation}\label{eq:def_theta}
\HHH_n(z)=\sum_{k=0}^{\infty} \xi_k e^{-\frac{k^2} n} z^k.
\end{equation}
The limiting distribution of zeros has the density $\frac 1{4\pi |z|^2}$ on $\C\backslash \DDD$. One can also take the sum in~\eqref{eq:def_theta} over $k\in\Z$ in which case the zeros fill the whole complex plane with the same density.

A similar model, namely the polynomials $\QQQ_n(z)=\sum_{k=0}^n \xi_k e^{-k^{\alpha}} z^k$, where $\alpha>1$, has been considered by~\citet{schehr_majumdar}. Assuming that $\xi_k$ are real-valued they showed that almost all zeros of $\QQQ_n$ become real if $\alpha>2$. In our model, the distribution of the arguments of the zeros remains uniform  for every $\alpha$.

\begin{figure}[t]
\includegraphics[height=0.49\textwidth, width=0.49\textwidth]{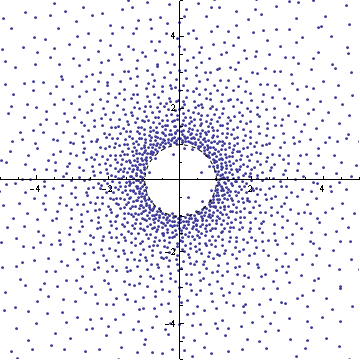}
\includegraphics[height=0.49\textwidth, width=0.49\textwidth]{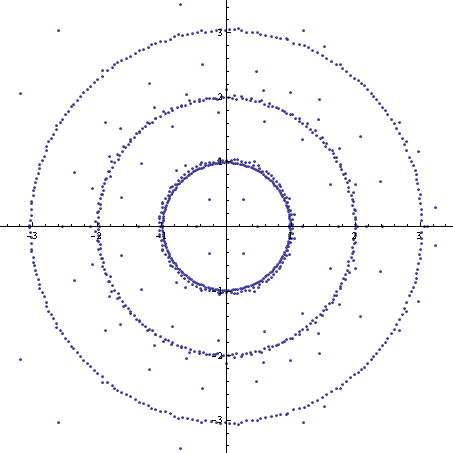}
\caption
{
Left:
Zeros of the randomized theta function $\HHH_n(z)=\sum_{k=0}^{\infty} \xi_k e^{-\frac{k^2}{n}} z^k$ with $n=2000$ and normally distributed coefficients.
Right:
Zeros of the random polynomial from Example~\ref{ex:three_circles} with $n=300$ and Cauchy-distributed coefficients.
}\label{fig:3_circles}
\end{figure}

\subsection{The general result}\label{subsec:general_theo}
We are going to state a theorem which contains all examples considered above as special cases. Let $\xi_0,\xi_1,\ldots$ be non-degenerate i.i.d.\ random variables such that $\E \log(1+|\xi_0|)<\infty$.
Consider a random Taylor series
\begin{equation}\label{eq:def_GGG}
\GGG_n(z)=\sum_{k=0}^{\infty} \xi_k f_{k,n} z^k,
\end{equation}
where $f_{k,n}\in\C$ are deterministic coefficients. We assume that there is a function $f:[0,\infty)\to [0,\infty)$ and a number $T_0\in (0,\infty]$ such that
\begin{enumerate}
\item[(A1)] $f(t)>0$ for $t<T_0$ and $f(t)=0$ for $t>T_0$.
\item[(A2)] $f$ is continuous on $[0,T_0)$, and, in the case $T_0<+\infty$, left continuous at $T_0$.
\item[(A3)]
$
\lim_{n\to\infty} \sup_{k\in[0,An]} \left||f_{k,n}|^{1/n}-f(\frac kn)\right|=0
$
for every $A>0$.
\item[(A4)]
$R_0:=\liminf_{t\to \infty} f(t)^{-1/t}\in(0,\infty]$ and
$\liminf_{n,\frac kn\to \infty} |f_{k,n}|^{-1/k} \geq R_0$.
\end{enumerate}
It will be shown later that Condition~(A4) ensures that the series~\eqref{eq:def_GGG} defining $\GGG_n$ converges with probability $1$ on the disk $\DDD_{R_0}$.
Let $I:\R\to\R\cup\{+\infty\}$ be the Legendre--Fenchel transform of  the function $u(t)=-\log f(t)$, where $\log 0=-\infty$. That is,
\begin{equation}\label{eq:def_I}
I(s)=\sup_{t\geq 0} (st-u(t))=\sup_{t\geq 0} (st+\log f(t)).   
\end{equation}
Note that $I$ is a convex function, $I(s)$ is finite for $s<\log R_0$ and $I(s)=+\infty$ for $s>\log R_0$. Recall that $\mu_{\GGG_n}$  is the measure assigning to each zero of $\GGG_n$ a weight equal to its multiplicity.
\begin{theorem}\label{theo:general}
Under the above assumptions, the sequence of random measures $\frac 1n \mu_{\GGG_n}$ converges in probability to some deterministic locally finite measure $\mu$ on the disk $\DDD_{R_0}$. The measure $\mu$ is rotationally invariant and is characterized by
\begin{equation}\label{eq:theo:general}
\mu(\DDD_r)
=I'(\log r), \;\;\; r<R_0.
\end{equation}
\end{theorem}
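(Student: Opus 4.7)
The natural framework is logarithmic potential theory. I would set $L_n(z) := \frac{1}{n}\log|\GGG_n(z)|$, a subharmonic function on $\DDD_{R_0}$ whose distributional Laplacian is $\frac{1}{2\pi}\Delta L_n = \frac{1}{n}\mu_{\GGG_n}$ (Poisson--Jensen). The candidate limit is $L(z):= I(\log|z|)$, which is subharmonic on $\DDD_{R_0}$ because $I$ is convex and non-decreasing on $(-\infty,\log R_0)$ by (A1)--(A2). The plan is to establish $L_n \to L$ in $L^1_{loc}(\DDD_{R_0})$ in probability. Since the distributional Laplacian is continuous on $L^1_{loc}$, this forces $\frac{1}{n}\mu_{\GGG_n} \to \mu := \frac{1}{2\pi}\Delta L$ vaguely in probability; the limit is then identified via the radial structure of $L$.

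\textbf{Pointwise limits and passage to $L^1_{loc}$.} For fixed $z$ with $0 < |z| < R_0$, I would first show $L_n(z) \to I(\log|z|)$ in probability. The upper bound $L_n(z) \leq I(\log|z|) + o(1)$ is the easier half: it follows from $|\GGG_n(z)| \leq \sum_k |\xi_k|\,|f_{k,n}|\,|z|^k$, the uniform asymptotic $|f_{k,n}|^{1/n} \to f(k/n)$ from (A3), the tail condition (A4) controlling large $k$, and the Borel--Cantelli consequence of $\E\log(1+|\xi_0|)<\infty$ that $\max_{k \leq An}\log(1+|\xi_k|) = o(n)$ almost surely for every $A>0$. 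The $n$th root of the bounding sum is then asymptotic to its largest term and converges to $\sup_{t\geq 0}(t\log|z|+\log f(t)) = I(\log|z|)$. The matching lower bound is the delicate step, since destructive interference in the random series could a priori make $|\GGG_n(z)|$ much smaller than any single summand. To handle it, I would exploit non-degeneracy of $\xi_0$ through a L\'evy-type anticoncentration estimate applied to a block of indices $k$ with $k/n$ near the maximising point $t^\star$ of $t \mapsto t\log|z|+\log f(t)$, showing that the corresponding random partial sum has modulus of order $e^{nI(\log|z|)-o(n)}$ with probability tending to $1$. Given pointwise convergence in probability together with a locally uniform upper bound supplied by the same argument, standard compactness for subharmonic functions bounded above (combined with uniqueness of the pointwise limit along subsequences) then promotes the convergence to $L^1_{loc}(\DDD_{R_0})$ in probability.

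\textbf{Identification of the limit.} Continuity of the distributional Laplacian on $L^1_{loc}$ yields $\frac{1}{n}\mu_{\GGG_n} \to \mu$ vaguely in probability. Since $L(z) = v(|z|)$ with $v(r) := I(\log r)$ is rotationally invariant, so is $\mu$. The classical formula for the Riesz mass of a radial subharmonic function then gives
\[
\mu(\DDD_r) \;=\; r\,v'(r) \;=\; I'(\log r)
\]
at every $r<R_0$ where $I$ is differentiable; by convexity of $I$ this covers all but countably many $r$, and the identity determines the rotationally invariant measure $\mu$ uniquely, proving \eqref{eq:theo:general}. The principal obstacle throughout is the probabilistic lower bound on $L_n(z)$ in the middle step; the upper bound and the potential-theoretic machinery (including compactness of subharmonic sequences and the radial Riesz-mass computation) are by now standard.
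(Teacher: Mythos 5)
Your plan follows the same high-level strategy as the paper: prove $\frac{1}{n}\log|\GGG_n(z)| \to I(\log|z|)$ pointwise in probability (upper bound from the log-moment condition via a Borel--Cantelli argument, lower bound from an anticoncentration inequality --- the paper uses Kolmogorov--Rogozin, which plays exactly the role you assign to a ``L\'evy-type'' estimate), pass to vague convergence of the normalized zero-counting measures through the distributional Laplacian, and identify the limit by a radial Riesz-mass computation. The genuine point of divergence is the passage from pointwise convergence to convergence against test functions. The paper uses the Tao--Vu dominated-convergence lemma and therefore must verify an $L^{1+\delta}$ (in fact $L^2$) tightness bound for $\frac{1}{n}\log|\GGG_n|$; establishing that bound via the Poisson--Jensen formula (controlling the number of zeros in a slightly larger disk, and bounding the Poisson integral from above and below) is the technical heart of their Section on the general theorem. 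You instead propose invoking H\"ormander-type compactness for families of subharmonic functions that are locally bounded above. This is a legitimate and recognized alternative --- it is essentially the route taken in circular-law proofs in the random-matrix literature --- and it does go through here, since the a.s.\ upper bound $\frac{1}{n}\log|\GGG_n(z)| \le I(\log|z|)+o(1)$ that the paper derives is in fact uniform over compacts in $\DDD_{R_0}$. That said, you gloss over the probabilistic bookkeeping: the pointwise limits hold only in probability, so one must first extract a subsequence on which a.s.\ convergence holds at a countable dense set of points, then apply the deterministic compactness theorem $\omega$-by-$\omega$ (which yields $\omega$-dependent sub-subsequences), and finally appeal to uniqueness of the limit (forced by the dense pointwise limit and the locally uniform upper bound) to conclude that the full subsequence converges, giving the desired contradiction. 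In exchange for these finitely many delicacies, the compactness route avoids the explicit $L^2$ estimate and the Poisson--Jensen machinery entirely. Your identification step $\mu(\DDD_r)=r\,v'(r)=I'(\log r)$ is correct because $\rho\,v'(\rho)=I'(\log\rho)\to 0$ as $\rho\to 0$, which is ensured by $\lim_{s\to-\infty}I'(s)=0$ from the constrained supremum defining $I$; the paper phrases the same identity in terms of left derivatives to fix the exceptional countable set you mention.
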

By convention, $I'$ is the left derivative of $I$.  Since $I$ is convex, the left derivative exists everywhere on $(-\infty, \log R_0)$ and is a non-decreasing, left-continuous function.
Since the supremum in~\eqref{eq:def_I} is taken over $t\geq 0$, we have $\lim_{s\to-\infty}I'(s)=0$. Hence, $\mu$ has no atom at zero.
If $I'$ is  absolutely continuous on some interval $(\log r_1,\log r_2)$, then the density of $\mu$ on the annulus $r_1<|z|<r_2$ with respect to the Lebesgue measure on $\C$ is
\begin{equation}\label{eq:rho_formula}
\rho(z)=\frac {I''(\log |z|)} {2\pi |z|^2}.
\end{equation}

It is possible to give a characterization of the measure $\mu$ without referring to the Legendre--Fenchel transform. The radial part of $\mu$ is a measure $\bar \mu$  on $(0,\infty)$ defined by $\bar \mu((0,r))=\mu(\DDD_r)$. Suppose first that $u$ is convex on $(0,T_0)$ (which is the case in all our examples). Then, $\bar \mu$ is the image of the Lebesgue measure on $(0,\infty)$ under the mapping $t\mapsto e^{u'(t)}$, where $u'$ is the left derivative of $u$. This follows from the fact that $(u')^{\leftarrow}=I'$ and $(I')^{\leftarrow}=u'$ by the Legendre--Fenchel duality, where $\varphi^{\leftarrow}(t)=\inf\{s\in\R: \varphi(s)\geq t\}$ is the generalized left-continuous inverse of a  non-decreasing function $\varphi$. In particular, the support of $\mu$ is contained in the annulus
$$
\left\{e^{\lim_{t\downarrow 0}u'(t)}\leq |z|\leq e^{\lim_{t\uparrow T_0} u'(t)}\right\}
$$ and is equal to this annulus if $u'$ has no jumps.
In general, any jump of $u'$ (or, by duality, any constancy interval of $I'$) corresponds to a missing annulus in the support of $\mu$. Also, any jump of $I'$ (or, by duality, any constancy interval of $u'$) corresponds to a circle with positive $\mu$-measure.  More precisely, if $I'$ has a jump at $s$ (or, by  duality, $u'$ takes the value $s$ on an interval of positive length), then $\mu$ assigns a positive weight (equal to the size of the jump) to the circle  of radius  $e^s$ centered at the origin.
In the case when $u$ is non-convex we can apply the same considerations after replacing $u$ by its convex hull.

One may ask what measures $\mu$ may appear as limits in Theorem~\ref{theo:general}. Clearly, $\mu$ has to be rotationally invariant, with no atom at $0$. The next theorem shows that there are no further essential  restrictions.
\begin{theorem}\label{theo:general_converse}
Let $\mu$ be a rotationally invariant measure on $\C$  such that
\begin{enumerate}
\item $\mu(\C\backslash \DDD_{R_0})=0$, where $R_0:=\sup\{r>0:\mu(\DDD_r)<\infty\}\in (0,\infty]$.
\item $\int_{0}^{R}\mu(\DDD_r)r^{-1} dr<\infty$ for some (hence, every) $R<R_0$.
\end{enumerate}
Then, there is a random Taylor series $\GGG_n$ of the form~\eqref{eq:def_GGG} with convergence radius a.s.\ $R_0$  such that $\frac 1 n\mu_{\GGG_n}$ converges in probability to $\mu$ on the disk $\DDD_{R_0}$.
\end{theorem}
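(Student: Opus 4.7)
The plan is to invert the recipe of Theorem~\ref{theo:general}: starting from the target measure $\mu$, we construct a convex function $u$, hence coefficients $f_{k,n}$, whose Legendre--Fenchel transform reproduces the radial distribution of $\mu$.

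Let $N(r):=\mu(\DDD_r)$ for $r\in(0,R_0)$. Since $N$ is non-decreasing and left-continuous, the function
\[
I(s) := \int_{-\infty}^{s} N(e^{\tau})\,d\tau, \qquad s<\log R_0,
\]
and $I(s):=+\infty$ for $s>\log R_0$, is well-defined, convex, non-decreasing, and finite on $(-\infty,\log R_0)$ thanks to hypothesis (2) (after the change of variables $r=e^\tau$). Its left derivative is $I'(s)=N(e^s)$. I then let
\[
u(t) := \sup_{s\in\R}\bigl(st-I(s)\bigr), \qquad t\in[0,\infty),
\]
be the Legendre--Fenchel dual of $I$. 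By standard convex analysis, $u$ is proper, convex, lower semicontinuous, non-decreasing, and (as $I$ is lsc and convex) the biconjugate relation $I(s)=\sup_{t\geq 0}(st-u(t))$ holds. I set $T_0:=\sup\{t\geq 0:u(t)<\infty\}\in(0,\infty]$ and $f(t):=e^{-u(t)}$, so that $f$ is continuous on $[0,T_0)$ (convex functions are continuous on the interior of their effective domain) and, after replacing $u$ by its lsc hull if necessary, left-continuous at $T_0$; this gives (A1) and (A2).

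For the coefficients, I simply put
\[
f_{k,n} := f(k/n)^{n}= e^{-n u(k/n)},
\]
which makes $|f_{k,n}|^{1/n}=f(k/n)$ exactly, so (A3) is trivial. For (A4), Legendre duality together with $I(s)<\infty$ for $s<\log R_0$ and $I(s)=+\infty$ for $s>\log R_0$ yields $\lim_{t\to\infty}u(t)/t=\log R_0$, hence $\liminf_{t\to\infty}f(t)^{-1/t}=R_0$; the same estimate applied to $f_{k,n}$ gives the required condition on $|f_{k,n}|^{-1/k}$. Choosing any non-degenerate i.i.d.\ $\xi_k$ with $\E\log(1+|\xi_0|)<\infty$ (for instance, standard Gaussians) and setting $\GGG_n(z):=\sum_{k=0}^\infty \xi_k f_{k,n}z^k$, the Cauchy--Hadamard formula combined with $|\xi_k|^{1/k}\to 1$ a.s.\ (a consequence of the log-moment condition, used throughout the paper) ensures the convergence radius is a.s.\ equal to $R_0$ for every $n$.

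Theorem~\ref{theo:general} now applies and produces a rotationally invariant limit measure $\tilde\mu$ on $\DDD_{R_0}$ with $\tilde\mu(\DDD_r)=I'(\log r)=N(e^{\log r})=\mu(\DDD_r)$ for every $r<R_0$. Two rotationally invariant locally finite measures on $\DDD_{R_0}$ with the same radial distribution function coincide, so $\tilde\mu=\mu$ and the conclusion follows. The main technical point — and the only place care is required — is checking that $u$ defined by conjugation satisfies the regularity in (A1)--(A2) and that (A4) holds with equality; both follow from standard Legendre--Fenchel duality, but one has to pass to the convex lsc hull of $u$ at the endpoint $T_0$ when $T_0<\infty$ in order to enforce left continuity there, and to note that modifying $u$ at the single point $T_0$ does not affect the Legendre transform and hence does not affect the limiting measure.
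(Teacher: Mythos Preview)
Your approach is essentially the same as the paper's: define $I$ as the primitive of $r\mapsto\mu(\DDD_{e^r})$, take $u$ to be its Legendre--Fenchel transform, set $f_{k,n}=e^{-nu(k/n)}$, and verify (A1)--(A4). Two small corrections: (i) the claim $|\xi_k|^{1/k}\to 1$ a.s.\ is not quite right (the $\liminf$ can be $0$ if $\P[\xi_0=0]>0$); what you actually have, and all you need for Cauchy--Hadamard, is $\limsup_k|\xi_k|^{1/k}=1$ a.s., which follows from Lemma~\ref{lem:log_moment} together with the second Borel--Cantelli lemma applied to $\{|\xi_k|>c\}$; (ii) the caveat about passing to the lsc hull of $u$ at $T_0$ is unnecessary, since any Legendre--Fenchel transform is automatically convex and lower semicontinuous, so left continuity at $T_0$ is immediate --- this is exactly how the paper argues it.
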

\begin{example}\label{ex:three_circles}
Consider a random polynomial
\begin{equation}\label{eq:def_three_circles}
\GGG_n(z)=\sum_{k=0}^{n} \xi_k z^k + 2^n\sum_{k=n+1}^{2n} \xi_k \left(\frac {z}{2}\right)^k
+\left(\frac 9 2\right)^n \sum_{k=2n+1}^{3n} \xi_k \left(\frac {z}{3}\right)^k.
\end{equation}
We can apply Theorem~\ref{theo:general} with
$$
u(t)
=
\begin{cases}
0, & t\in[0,1], \\
(\log 2) (t-1), &t\in[1,2],\\
(\log 3) t -\log \frac 92, &t\in[2,3], \\
+\infty, &t\geq 3,
\end{cases}
\;\;\;
I(s)
=
\begin{cases}
0, & s\leq 0,\\
s, & s\in [0,\log 2], \\
2s -\log 2, &s\in[\log 2,\log 3],\\
3s -\log 6, &t\geq \log 3.
\end{cases}
$$
The function $u'$ has three constancy intervals of length $1$ where it takes values $0,\log2,\log 3$. Dually, the function $I'$ has three jumps of size $1$ at $0,\log 2, \log 3$ and is locally constant outside these points. It follows that the limiting distribution of the zeros of $\GGG_n$ is the sum of uniform probability distributions on three concentric circles with radii $1,2,3$; see Figure~\ref{fig:3_circles}.
\end{example}

\begin{remark}
Suppose that $\GGG_n$ satisfies the assumptions of Theorem~\ref{theo:general}. Then, so does the derivative $\GGG_n'$ (and, moreover, $f$ is the same in both cases). Thus, the derivative of any fixed order of $\GGG_n$ has the same limiting distribution of zeros as $\GGG_n$ itself. Similarly, for every complex sequence $c_n$ such that $\lim_{n\to\infty} \frac 1n \log |c_n|\leq f(0)$, the function $\GGG_n(z)-c_n$ satisfies the assumptions. Hence, the limiting distribution of the solutions of the equation $\GGG_n(z)=c_n$ is the same as for the zeros of $\GGG_n$.
\end{remark}


\subsection{Connection with logarithmic potentials and orthogonal polynomials}\label{subsec:log_potential}
The identity $\frac 1 {2\pi}\Delta \log |z-z_0|=\delta(z_0)$  implies the crucial formula
\begin{equation}\label{eq:mu_GGG_is_Laplace_log_GGG}
\mu_{\GGG_n}=\frac 1 {2\pi} \Delta \log |\GGG_n(z)|,
\end{equation}
where the Laplacian $\Delta$ should be understood in the distributional sense.  The first step in the proof of Theorem~\ref{theo:general} is to compute the limiting logarithmic potential in~\eqref{eq:mu_GGG_is_Laplace_log_GGG}.
\begin{theorem}\label{theo:log_asympt}
Under the assumptions of Section~\ref{subsec:general_theo}, for every $z\in\DDD_{R_0}\backslash\{0\}$,
\begin{equation}\label{eq:log_asympt}
p_n(z):=\frac 1n \log |\GGG_n(z)| \toprobab I(\log |z|).
\end{equation}
\end{theorem}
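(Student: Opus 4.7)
The plan is to prove matching upper and lower bounds for $p_n(z) = n^{-1} \log|\GGG_n(z)|$, each in probability. Write $s := \log|z|$.

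\textbf{Upper bound: $\P(p_n(z) > I(s) + \eps) \to 0$.} By the triangle inequality, $|\GGG_n(z)| \leq \sum_k |\xi_k|\,|f_{k,n}|\,|z|^k$. The logarithmic moment condition $\E \log(1+|\xi_0|)<\infty$ combined with Borel--Cantelli yields $|\xi_k| \leq e^{\eps k}$ for all but finitely many $k$, almost surely. For the head $k\leq An$ (with $A$ large), assumption (A3) gives $\frac{1}{n}\log(|f_{k,n}|\,|z|^k) = s\cdot(k/n) - u(k/n) + o(1) \leq I(s) + o(1)$ uniformly in $k$, so the head is bounded by $(An+1)\,e^{n(I(s)+\eps') + o(n)}$. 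For the tail $k>An$, assumption (A4) together with $|z|<R_0$ yields $|f_{k,n}|\,|z|^k \leq q^k$ for some $q<1$, giving a geometric series dominated by $q^{An}$, which for $A$ large enough is $\leq e^{n(I(s)-1)}$ and hence negligible. Taking logarithms and sending $A\to\infty$, $\eps'\to 0$ gives the upper bound.

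\textbf{Lower bound: $\P(p_n(z) < I(s) - \eps) \to 0$.} This is the main technical step. Fix $\eps > 0$, pick $t^* \in [0, T_0)$ with $s t^* - u(t^*) > I(s) - \eps$ (possible by the definition $I(s) = \sup_{t\geq 0}(st - u(t))$), and set $k^* = \lfloor n t^* \rfloor$. By (A3), $|f_{k^*, n} z^{k^*}| \geq e^{n(I(s) - 2\eps)}$ for large $n$. I would then identify a window $K_n$ of indices around $k^*$ on which $|f_{k,n} z^k|$ remains within a constant factor of $e^{nI(s)}$, with $|K_n|\to\infty$; by the Taylor expansion of $t\mapsto st-u(t)$ near $t^*$, the size is $|K_n|\asymp\sqrt{n}$ in the strictly convex case, and $|K_n|\asymp n$ on any interval where $st-u(t)$ is affine. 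Conditioning on $\{\xi_k : k \notin K_n\}$, the random variable $\GGG_n(z)$ equals $S_n + Y_n$ where $Y_n$ is conditionally fixed and
$$
S_n := \sum_{k \in K_n} \xi_k f_{k,n} z^k
$$
has i.i.d.\ non-degenerate summands multiplied by complex coefficients of comparable magnitudes $\gtrsim e^{nI(s)}$. A Kolmogorov--Rogozin-type anti-concentration inequality then bounds the Lévy concentration function of $S_n$ at scale $L$ by $C L\, /\,(\sqrt{|K_n|}\cdot e^{n I(s)})$; taking $L = e^{n(I(s) - 3\eps)}$ yields $\P(|S_n - (-Y_n)| < L) \leq O(e^{-3n\eps}/\sqrt{|K_n|}) \to 0$, and averaging over $Y_n$ gives the lower bound (with $3\eps$ in place of $\eps$).

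\textbf{Main obstacle.} The work concentrates in the lower bound. Two delicate points are: (i) constructing $K_n$ when $u$ is non-convex or non-smooth near $t^*$, which requires passing to the convex hull $u^{**}$ (using that $I$ is the Legendre--Fenchel transform of both) and analyzing contact points of $u^{**}$ with its supporting lines; and (ii) the anti-concentration inequality must hold for weighted i.i.d.\ sums with complex coefficients under only the non-degeneracy of $\xi_0$, which alone guarantees $Q_{\xi_0}(\delta) < 1$ for some $\delta > 0$---this is exactly enough input for the Kolmogorov--Rogozin-type bound to furnish the quantitative decay in $|K_n|$ that we need.
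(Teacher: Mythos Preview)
Your approach is essentially the same as the paper's: upper bound by the triangle inequality, splitting head/tail via (A3)/(A4) and using the logarithmic moment; lower bound by isolating indices with large coefficients and applying the Kolmogorov--Rogozin inequality. The paper even obtains the quantitative rate $O(n^{-1/2})$ for the lower-bound probability.

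Two remarks on the lower bound, where your execution diverges from the paper's:

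\emph{The window $K_n$.} You ask that $|f_{k,n}z^k|$ stay within a \emph{constant} factor of $e^{nI(s)}$, which in the strictly convex case forces $|K_n|\asymp\sqrt n$ and, as you note, leads to genuine difficulties when $u$ is non-smooth or non-convex. The paper avoids this entirely: it only requires $|f_{k,n}||z|^k>e^{n(I(s)-3\eps)}$, i.e.\ a factor $e^{-3\eps n}$ rather than a constant. By \emph{continuity} of $f$ (assumption (A2)) alone, the set $\{t: f(t)|z|^t>e^{I(s)-2\eps}\}$ contains a closed interval $J$ of positive length, so $|\JJJ_n|\sim |J|\cdot n$ with no convexity or smoothness needed. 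Your ``main obstacle'' (i) disappears once you relax the constant-factor requirement.

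\emph{The form of the Kolmogorov--Rogozin bound.} The bound you write, $Q(S_n;L)\le CL/(\sqrt{|K_n|}\,e^{nI(s)})$, is not what Kolmogorov--Rogozin gives: the inequality yields $Q(S_n;L)\le C\bigl(\sum_{k\in K_n}(1-Q(\xi_0;L/|a_{k,n}|))\bigr)^{-1/2}$, which for $L/|a_{k,n}|$ small enough (so that $Q(\xi_0;\cdot)<1$, using only non-degeneracy) reduces to $C/\sqrt{|K_n|}$. There is no linear dependence on $L$. Fortunately this weaker bound is exactly what is needed, so your argument still closes---but the extra factor $e^{-3n\eps}$ you claim is not actually there.
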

\begin{remark}\label{rem:mu_laplace_I_log}
Taking~\eqref{eq:mu_GGG_is_Laplace_log_GGG} and~\eqref{eq:log_asympt} together we obtain, at least formally,  the following equivalent version of~\eqref{eq:theo:general}:
$\mu=\frac 1{2\pi} \Delta (I(\log|z|))$.
\end{remark}
The measure $\mu$ can be interpreted as an equilibrium measure for the logarithmic potential in the presence of an external field; see~\cite{saff_totik_book} for an account of logarithmic potential theory.  Assume that the assumptions of Section~\ref{subsec:general_theo} hold and additionally,  the function $u$ is  convex on $(0,T_0)$. Since we would like to deal with finite measures,  let us take some $\kappa\in (0,T_0)$ and consider a truncated version of $\GGG_n$:
$$
\GGG_n^{(\kappa)}(z)=\sum_{k=0}^{[\kappa n]} \xi_kf_{k,n} z^k.
$$
If $\GGG_n$ satisfies assumptions~(A1)--(A4) of Section~\ref{subsec:general_theo}, then so does $\GGG_n^{(\kappa)}$ with
$$
u^{(\kappa)}(t)
=
\begin{cases}
u(t), &t\in[0,\kappa]\\
+\infty, &t>\kappa.
\end{cases}
$$
By Theorem~\ref{theo:general}, the measure $\frac 1n \mu_{\GGG_n^{(\kappa)}}$ converges in probability to a limiting measure denoted by $\mu^{(\kappa)}$.
\begin{proposition}\label{prop:equilibrium}
The measure $\mu^{(\kappa)}$ is the unique minimizer of the functional
$$
J(\nu)=\frac 12 \int_{\C}\int_{\C} \log \frac {1}{|z-w|}\nu(dz)\nu(dw) + \int_{\C} I(\log |z|) \nu(dz)
$$
in the set of all measures $\nu$ on $\C$ which have total mass $\kappa$.
\end{proposition}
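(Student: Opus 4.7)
The plan is to verify the Euler--Lagrange conditions for $\mu^{(\kappa)}$ in the weighted logarithmic equilibrium problem defined by $J$, and then conclude via the strict convexity of the logarithmic energy. This reduces the proposition to a potential-theoretic identity that follows directly from the description of $\mu^{(\kappa)}$ provided by Theorem~\ref{theo:general}.

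First, I identify $\mu^{(\kappa)}$ concretely. By Theorem~\ref{theo:general} applied to $\GGG_n^{(\kappa)}$, the measure $\mu^{(\kappa)}$ is rotationally invariant with $\mu^{(\kappa)}(\DDD_r) = (I^{(\kappa)})'(\log r)$, where $I^{(\kappa)}$ is the Legendre--Fenchel transform of $u^{(\kappa)}$. Since $u^{(\kappa)} = +\infty$ on $(\kappa,\infty)$, one sees from the definitions that $(I^{(\kappa)})'(s) = \min(I'(s),\kappa)$. Hence $\mu^{(\kappa)}$ has total mass $\kappa$ and compact radial support contained in the locus $\{|z|:I'(\log|z|)\le \kappa\}$, which is exactly the set where $I(\log|z|) = I^{(\kappa)}(\log|z|)$.

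Next, I compute the logarithmic potential $U^{\mu^{(\kappa)}}(z)=-\int\log|z-w|\,\mu^{(\kappa)}(dw)$. Rotational invariance together with the identity $\int_0^{2\pi}\log|z-re^{i\theta}|\,d\theta/(2\pi)=\max(\log|z|,\log r)$ reduce the computation to $\int_\R \max(\log|z|,s)\,d((I^{(\kappa)})'(s))$. An integration by parts exploiting that $(I^{(\kappa)})'$ stabilizes at $\kappa$ and that $I^{(\kappa)}(s)=\kappa s-u(\kappa)$ above the top of the support yields the clean formula
$$
U^{\mu^{(\kappa)}}(z) = -I^{(\kappa)}(\log|z|) - u(\kappa), \qquad z\in\C\setminus\{0\},
$$
which is the integrated form of the distributional identity in Remark~\ref{rem:mu_laplace_I_log}.

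Setting $F = -u(\kappa)$ and combining the previous two steps gives
$$
U^{\mu^{(\kappa)}}(z) + I(\log|z|) - F \;=\; I(\log|z|) - I^{(\kappa)}(\log|z|) \;\ge\; 0,
$$
since the supremum defining $I^{(\kappa)}$ is taken over the smaller set $[0,\kappa]$. The right-hand side vanishes on $\supp\mu^{(\kappa)}$ by the first step, so these are exactly the Euler--Lagrange conditions for the weighted equilibrium problem with external field $I(\log|\cdot|)$ and total mass $\kappa$. Uniqueness of the minimizer then follows from the strict convexity of the logarithmic energy on signed measures of total mass zero with finite energy, a standard result in logarithmic potential theory (see~\cite{saff_totik_book}), so $\mu^{(\kappa)}$ is the unique minimizer of $J$. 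The main technical obstacle is the integration by parts in the potential computation, which must be carried out carefully when $I'$ has jumps (producing atomic circles in $\mu^{(\kappa)}$) or flat pieces (producing gaps in the support); a standard smoothing of $u$ reduces the general case to the smooth strictly convex case where the calculation is transparent.
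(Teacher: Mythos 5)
Your proof is correct and follows essentially the same route as the paper: compute the logarithmic potential of $\mu^{(\kappa)}$, verify the Frostman/Euler--Lagrange conditions for the weighted equilibrium problem with external field $I(\log|\cdot|)$, and conclude uniqueness from the Saff--Totik characterization. The only presentational difference is that you package the potential into the single identity $U^{\mu^{(\kappa)}}(z)=-I^{(\kappa)}(\log|z|)-u(\kappa)$ (exploiting $(I^{(\kappa)})'=\min(I',\kappa)$), which neatly unifies the three-case formula~\eqref{eq:potential_mu_kappa} used in the paper.
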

Denote by $S^{(\kappa)}$ the support of the measure $\mu^{(\kappa)}$.   From the interpretation of $\mu^{(\kappa)}$ as the image of the Lebesgue measure under the mapping $t\mapsto \exp\{(u^{(\kappa)})'(t)\}$ the following monotonicity property follows:  if $\kappa_1<\kappa_2$, then $S^{(\kappa_1)}\subset S^{(\kappa_2)}$.  If the left derivative of $u^{(\kappa)}$  has no constancy intervals, then the measures $\mu^{(\kappa_1)}$ and $\mu^{(\kappa_2)}$ coincide on $S^{(\kappa_1)}$. Thus, as $\kappa$ grows, the zeros fill larger and larger domains without changing their density on domains which are already occupied by the zeros.

Let us finally mention a connection between the random polynomial models considered above and orthogonal polynomials. Let $m$ be a finite measure on $\C$ having an infinite support and finite moments of all orders. We can construct a sequence of polynomials $p_{0}(z), p_{1}(z), \ldots$ which form an orthonormal system in the Hilbert space $L^2(m)$ and such that the degree of $p_{k}$ is $k$. At least for some special classes of measures $m$ the empirical distribution of zeros $\frac 1k \mu_{p_k}$ of the deterministic polynomial $p_k$ converges, as $k\to\infty$, to some limiting measure which can be interpreted as the equilibrium measure (for the logarithmic potential without external field) on the support of $m$. The simplest example are Chebyshev (or, more generally, Jacobi) polynomials whose zeros have limiting arcsine distribution.
However, the equilibrium property of the limiting empirical distribution may fail even for very simple measures $m$. For example, if $m$ is rotationally invariant, then $p_k$ is a multiple of $z^k$ having a unique zero of multiplicity $k$ at $0$.

We are going to show how the potential theoretic interpretation of the limiting distribution of zeros can be restored if we pass to \textit{random} polynomials. Let $m_1,m_2,\ldots$ be finite, rotationally invariant measures on $\C$ having moments of all orders.  The polynomials which are orthonormal with respect to $m_n$ have the form $p_{k,n}(z)=f_{k,n}z^k$, $k\in\N_0$, where
\begin{equation}\label{eq:def_fkn_orthogonal}
f_{k,n}
=
\left(\int_{\C}|z|^{2k}m_n(dz)\right)^{-1/2}.
\end{equation}
Consider a random linear combination of these polynomials
$$
\GGG_n^{(\kappa)}(z)=\sum_{k=0}^{[\kappa n]} \xi_k f_{k,n}z^k.
$$
Some of the ensembles of random polynomials we considered above are particular cases of this setting. For example, taking $m_n(dz)=\frac{n}{\pi}e^{-n|z|^2}\lambda(dz)$ we have $p_{k,n}=(z \sqrt n)^{k}/\sqrt{k!}$  thus recovering the Weyl polynomials. Kac polynomials are recovered if we take $m_n=\lambda_{\TTT}$, the uniform probability distribution on $\TTT$. See Table~\ref{tab:orthogonal_poly} for more examples.  Observe that in all examples listed in Table~\ref{tab:orthogonal_poly}  the sequence $m_n$ satisfies a large deviation principle (see~\cite{dembo_zeitouni_book}) with speed $n$ and a certain rotationally invariant rate function $2Q(|z|)$.  This means that
$$
\limsup_{n\to\infty} \frac 1n \log m_n(K)\leq -2Q(K),
\;\;\;
\liminf_{n\to\infty} \frac 1n \log m_n(U)\geq -2Q(U)
$$
for every closed set $K\subset \C$ and every open set $U\subset \C$. Here, $Q(A)=\inf_{z\in A} Q(|z|)$ for any set $A\subset \C$.
For example, in the case of Weyl polynomials we have $Q(|z|)=\frac 12 |z|^2$. On the other hand, in the case of Weyl polynomials, the external field in Proposition~\ref{prop:equilibrium} is  given by $I(\log |z|)=\frac 1 2 |z|^2$.  This coincidence is a particular case of a more general statement, as we will argue now. We give the main idea skipping technical details.
By  Varadhan's lemma applied to~\eqref{eq:def_fkn_orthogonal} we have for all $t\geq 0$,
\begin{equation}\label{eq:varadhan_lemma}
\lim_{n\to\infty} \frac 1n \log f_{tn,n} =  \sup_{r>0} (t\log r - Q(r)) = \sup_{s\in\R} (ts - Q(e^s)) =: u(t).
\end{equation}
Imposing additional minor assumptions it is possible to check that we can apply Theorem~\ref{theo:general} and Proposition~\ref{prop:equilibrium} to $\GGG_n^{(\kappa)}$ with $I$ being the Legendre--Fenchel dual of $u$. However, the Legendre--Fenchel dual of $u$ is $Q(e^s)$ by~\eqref{eq:varadhan_lemma}.  We have thus identified the external field $I(\log |z|)$ with the rate function $Q(|z|)$. Some special cases are listed in Table~\ref{tab:orthogonal_poly}.

In the case of a not necessarily rotationally invariant weight $m_n=m$ which does not depend on $n$, the zeros of random combinations of orthogonal polynomials (with Gaussian coefficients) have been studied in~\cite{shiffman_zelditch1,bloom_shiffman}. Under appropriate conditions on $m$ it has been shown  that the zeros of such polynomials are asymptotically distributed according to the equilibrium measure on the support $S$ of $m$. The external potential in this case is equal to $+\infty$ outside  $S$  and is $0$ on $S$. The external potential  just restricts the equilibrium  measure to $S$. This setting includes the Kac polynomials but it does not include other examples considered here.

{\tiny
\begin{table}[t]
\centering
\caption{Random analytic functions associated to orthogonal polynomials with rotationally invariant weights}
\begin{tabular}{|l|ccccc|}
\hline
\rule{0pt}{5mm} \rule[-4mm]{0pt}{0pt}Model& $p_{k,n}$ & $m_n$ & $Q(|z|)$ &$\mu^{(\kappa)}$ &$S^{(\kappa)}$\\
\hline
\rule{0pt}{5mm} \rule[-4mm]{0pt}{0pt}Kac&  $z^k$ & $\lambda_{\TTT}$ &
$\begin{cases}0,& z \in \TTT,\\ +\infty, &z\notin \TTT\end{cases}$
&$\kappa \lambda_{\TTT}$& $\TTT$\\
\hline
\rule{0pt}{5mm} \rule[-4mm]{0pt}{0pt}Flat (Weyl)&  $\frac{(z\sqrt n)^k}{\sqrt {k!}}$ & $\frac{n}{\pi}e^{-n|z|^2}$ &
$\frac 12 |z|^2$
&$\frac 1 {\pi} \lambda$& $\bar \DDD_{\sqrt {\kappa}}$ \\
\hline
\rule{0pt}{5mm} \rule[-4mm]{0pt}{0pt}Elliptic&  $\sqrt{\frac{n(n-1)\ldots (n-k+1)}{k!}}z^k$ & $\frac{1}{\pi} \frac{n+1}{(1+|z|^2)^{n+2}}$ &
$\frac 12 \log (1+|z|^2)$
&$\frac 1 {\pi} \frac{1}{(1+|z|^2)^2}$& $\bar \DDD_{\sqrt {\frac{\kappa}{1-\kappa}}}$ \\
\hline
\rule{0pt}{5mm} \rule[-4mm]{0pt}{0pt}
\begin{tabular}[x]{@{}l@{}}Hyperbolic\\($z\in\DDD$)\end{tabular} &  $\sqrt{\frac{n(n+1)\ldots (n+k-1)}{k!}}z^k$ & $\frac{n-1}{\pi} (1-|z|^2)^{n-2}$ &
$-\frac 12 \log (1-|z|^2)$
&$\frac 1 {\pi} \frac{1}{(1-|z|^2)^2}$& $\bar \DDD_{\sqrt {\frac{\kappa}{1+\kappa}}}$ \\
\hline
\rule{0pt}{5mm} \rule[-4mm]{0pt}{0pt}
\begin{tabular}[x]{@{}l@{}}Theta\\($z\in\C\backslash \DDD$)\end{tabular}&  $e^{-\frac{(k+1)^2}{2n}}z^k$ &
$\pi^{-\frac 32}\sqrt n |z|^{-n\log |z|}$
&
$(\log |z|)^2$
&
$\frac 1 {|z|^2}$& $\bar \DDD_{e^{\kappa}}\backslash \DDD$ \\
\hline
\end{tabular}
\label{tab:orthogonal_poly}
\end{table}
}
{



\subsection{Open questions}\label{subsec:open_questions}
We established the global universality  for the distribution of complex zeros under a logarithmic moment condition. We strongly believe that \textit{local} universality for \textit{complex} zeros, as well as \textit{local} and \textit{global} universality for \textit{real} zeros for the models considered here can be proved under stronger moment conditions. For example, there should be local universality for distributions from the domain of attraction of an $\alpha$-stable law, $\alpha\in (0,2]$, however, different values of $\alpha$ should correspond to different local universality classes. The distribution of zeros of the Kac polynomials $\KKK_n$ in the case of logarithmically decaying tails has been studied in~\cite{kabluchko_zaporozhets}. We believe that in the general setting considered here, similar results should hold. In particular, the zeros should have a circle structure similar to that found in~\cite{kabluchko_zaporozhets}; see Figure~\ref{fig:weyl_universality} (right). It should be possible to generalize our results to (systems of) random analytic functions in many variables.
In Theorems~\ref{theo:LO_poly}, \ref{theo:LO_entire} we proved the a.s.\ convergence for the empirical distribution of zeros in Littlewood--Offord models. We used the natural nested structure of these models. It is open whether convergence in probability can be replaced by the a.s.\ convergence in Theorem~\ref{theo:invariant}. We don't know whether the approach of Section~\ref{subsec:log_potential} can be extended to polynomials which are orthogonal with respect to not necessarily rotationally invariant weights. 

\section{Proofs: Special cases}\label{sec:proofs_special_cases}
We are going to prove the results of Section~\ref{sec:intro}. We will verify  the assumptions of Section~\ref{subsec:general_theo} and apply Theorem~\ref{theo:general}. Recall the notation $u(t)=-\log f(t)$.
\begin{proof}[Proof of Theorem~\ref{theo:kac_generalized}.]
We can assume that $w=0$ since otherwise we can consider the polynomial $\WWW_n(e^{-w}z)$. It follows from $\lim_{k\to\infty}\frac 1k \log |w_k|=0$ that assumptions~(A1)--(A4) of Section~\ref{subsec:general_theo} are fulfilled with $T_0=1$, $R_0=+\infty$ and
$$
f(t)=
\begin{cases}
1,  &t\in[0,1],\\
0, &t>1,
\end{cases}
\;\;\;
u(t)=
\begin{cases}
0,  &t\in[0,1],\\
+\infty, &t>1.
\end{cases}
$$
The Legendre--Fenchel transform of $u$ is given by $I(r)=\max (0, r)$.
It follows from~\eqref{eq:theo:general} that $\mu$ is the uniform probability measure on $\TTT$.
\end{proof}
\begin{remark}
Under a slightly more restrictive assumption $\E \log |\xi_0|<\infty$, Theorem~\ref{theo:kac_generalized} can be deduced from the result of~\citet{hughes_nikeghbali} (which is partially based on the Erd\H{o}s--Turan inequality). This method, however, requires a subexponential growth of the coefficients and therefore fails in all other examples we consider here.
\end{remark}

\begin{proof}[Proof of Theorem~\ref{theo:invariant}]
By the Stirling formula, $\log n!=n\log n-n+o(n)$ as $n\to\infty$.
It follows that assumption~(A3) holds with
$$
u(t)=
\begin{cases}
\alpha(t\log t+ (1-t)\log (1-t)),  &(0\leq t\leq 1, \text{ elliptic case}),\\
\alpha(t\log t-t), &(t\geq 0, \text{ flat case)},\\
\alpha(t\log t- (1+t)\log (1+t)), &(t\geq 0, \text{ hyperbolic case)}.
\end{cases}
$$
In the elliptic case, $u(t)=+\infty$ for $t>1$. The Legendre--Fenchel transform of $u$ is given by
$$
I(s)=
\begin{cases}
\alpha \log (1+e^{s/\alpha}), &(s\in\R, \text{ elliptic case}),\\
\alpha e^{s/\alpha}, &(s\in \R,\text{ flat case}),\\
-\alpha \log(1-e^{s/\alpha}), &(s<0, \text{ hyperbolic case}).
\end{cases}
$$
In the hyperbolic case,  $I(s)=+\infty$ for $s\geq 0$. We have $R_0=1$ in the hyperbolic case and $R_0=+\infty$ in the remaining two cases. The proof is completed by applying Theorem~\ref{theo:general}.
\end{proof}

\begin{proof}[Proof of Theorem~\ref{theo:LO_poly}]
We are going to apply Theorem~\ref{theo:general} to the polynomial $\GGG_n(z)=\WWW_n(e^{\beta} n^{\alpha} z)$.  We have $f_{k,n}=e^{\beta k + \alpha k \log n} w_{k}$. Equation~\eqref{eq:fk_asympt}  implies that assumption~(A3) is satisfied with
$$
u(t)
=
\begin{cases}
\alpha(t\log t-t), &t\in [0,1],\\
+\infty, &t>1.
\end{cases}
$$
The Legendre--Fenchel transform of $u$ is given by
$$
I(s)=
\begin{cases}
\alpha e^{s/\alpha}, &s\leq 0,\\
\alpha + s, &s\geq 0.
\end{cases}
$$
Applying Theorem~\ref{theo:general} we obtain that $\frac 1n \mu_{\GGG_n}$ converges in probability to the required limit. A.s.\ convergence will be demonstrated in Section~\ref{subsec:proof_LO_poly_as} below.
\end{proof}

\begin{proof}[Proof of Theorem~\ref{theo:LO_entire}]
We apply Theorem~\ref{theo:general} to $\GGG_n(z)=\WWW(e^{\beta} n^{\alpha} z)$.
We have $u(t)=\alpha(t\log t-t)$ for all $t\geq 0$. Hence, $I(s)=\alpha e^{s/\alpha}$ for all $s\in\R$. We can apply Theorem~\ref{theo:general} to prove convergence in probability. A.s.\ convergence will be demonstrated in Section~\ref{subsec:proof_LO_entire_as} below.
\end{proof}

\begin{proof}[Proof of Theorem~\ref{theo:stable}]
Put $\sigma=+1$ in the case $\alpha>1$ and $\sigma=-1$ in the case $\alpha<1$.
We have $u(t)=\sigma t^{\alpha}$ for $t\geq 0$. It follows that
$$
I(r)
=
\begin{cases}
\sigma (\alpha-1) \left(\frac {\sigma r}{\alpha}\right)^{\frac {\alpha}{\alpha-1}}, &\sigma r\geq 0,\\
+\infty, &\sigma r<0.
\end{cases}
$$
We can apply Theorem~\ref{theo:general}.
\end{proof}

\section{Proof of Theorem~\ref{theo:log_asympt}}\label{sec:proof_log_asympt}
\subsection{The logarithmic moment condition}
The next lemma is the only place in our proofs where the logarithmic moment condition appears.
\begin{lemma}\label{lem:log_moment}
Let $\xi_0,\xi_1,\ldots$ be i.i.d.\ random variables. Fix $\eps>0$. Then,
\begin{equation}\label{eq:def_max_xi_k}
M:=\sup_{k=0,1,\ldots} \frac{|\xi_k|}{e^{\eps k}} < +\infty \text{ a.s.}
\;\;\;\Longleftrightarrow\;\;\;
\E \log (1+|\xi_0|)<\infty.
\end{equation}
\end{lemma}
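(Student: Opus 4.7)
The plan is to treat this as a straightforward Borel--Cantelli exercise, using the fact that the $\xi_k$ are i.i.d.\ to get independence of the relevant tail events so that both the convergence and the divergence halves of Borel--Cantelli are available.

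First I would establish the key tail--sum identity. Setting $Y=\log^+|\xi_0|$ (which differs from $\log(1+|\xi_0|)$ by at most $\log 2$, so the two have simultaneous finite expectation), the standard layer--cake inequality
$$\frac{1}{\varepsilon}\mathbb{E}[Y]\;\le\;\sum_{k=0}^{\infty}\mathbb{P}(Y>\varepsilon k)\;\le\;\frac{1}{\varepsilon}\mathbb{E}[Y]+1$$
gives
$$\sum_{k=0}^{\infty}\mathbb{P}\bigl(|\xi_0|>e^{\varepsilon k}\bigr)<\infty\quad\Longleftrightarrow\quad \mathbb{E}\log(1+|\xi_0|)<\infty.$$
More generally, for any constant $C>0$, replacing $\xi_0$ by $\xi_0/C$ shows that the same equivalence holds with $e^{\varepsilon k}$ replaced by $Ce^{\varepsilon k}$, since this only shifts $Y$ by a bounded amount.

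For the ``$\Leftarrow$'' direction, assume $\mathbb{E}\log(1+|\xi_0|)<\infty$. The events $A_k=\{|\xi_k|>e^{\varepsilon k}\}$ are independent with summable probabilities by the display above, so Borel--Cantelli I gives $\mathbb{P}(A_k\text{ i.o.})=0$. Thus a.s.\ there is some (random) $k_0$ with $|\xi_k|\le e^{\varepsilon k}$ for all $k\ge k_0$, hence $|\xi_k|/e^{\varepsilon k}\le 1$ for $k\ge k_0$. The sup over $k<k_0$ is a sup of finitely many a.s.\ finite quantities, so $M<\infty$ a.s.

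For the ``$\Rightarrow$'' direction, I would proceed by contraposition: assume $\mathbb{E}\log(1+|\xi_0|)=\infty$ and show $M=+\infty$ a.s. Fix any $C>0$. By the generalised tail--sum identity, $\sum_k\mathbb{P}(|\xi_0|>Ce^{\varepsilon k})=\infty$, and the events $\{|\xi_k|>Ce^{\varepsilon k}\}$ are independent, so Borel--Cantelli II yields $|\xi_k|/e^{\varepsilon k}>C$ for infinitely many $k$ almost surely. Consequently $\mathbb{P}(M\ge C)=1$ for every $C>0$, and intersecting over $C\in\mathbb{N}$ gives $M=+\infty$ a.s. The only subtlety to keep in mind is this last step: pointwise $M=\infty$ needs the constant $C$ to be varied, since a direct application of Borel--Cantelli to $A_k$ alone would only give $\limsup_k|\xi_k|/e^{\varepsilon k}\ge 1$. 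Everything else is mechanical.
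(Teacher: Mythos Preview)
Your proof is correct and follows essentially the same route as the paper: reduce the moment condition to summability of $\P[|\xi_0|>e^{\varepsilon k}]$ via the layer--cake estimate, then invoke both halves of Borel--Cantelli. The paper is terser and, for the converse, implicitly exploits the ``for every $\varepsilon$'' clause (applying Borel--Cantelli II with some $\varepsilon'>\varepsilon$ to force $|\xi_k|/e^{\varepsilon k}\to\infty$ along a subsequence), whereas you achieve the same end by varying the threshold $C$; both devices are equivalent and your more explicit treatment of this point is a welcome clarification.
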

\begin{proof}
For every non-negative random variable $X$ we have
$$
\sum_{k=1}^{\infty} \P[X\geq k] \leq  \E X\leq \sum_{k=0}^{\infty}\P[X\geq k].
$$
With $X=\frac 1 {\eps}\log (1+|\xi_0|)$ it follows that $\E \log(1+|\xi_0|)<\infty$ if and only if
$
\sum_{k=1}^{\infty}\P[|\xi_0|\geq e^{\eps k}-1] < \infty 
$ for some (equivalently, every) $\eps>0$.
The proof is completed by applying the Borel--Cantelli lemma.
\end{proof}

\subsection{Upper bound}
Take some $z\in\DDD_{R_0}\backslash\{0\}$. Fix an $\eps>0$.  We will show that
\begin{equation}\label{eq:upper_bound}
\limn \P\left[|\GGG_n(z)|>e^{n(I(\log |z|)+4\eps)}\right]=0.
\end{equation}
First, we estimate the tail of the Taylor series~\eqref{eq:def_GGG} defining $\GGG_n$. By making $\eps$ smaller we may assume that  $|z|e^{2\eps}<R_0$. By assumption~(A4) there is $A>0$ such that for all $n\geq A$ and all $k\geq An$,
$$
|f_{k,n}|< (|z| e^{2\eps})^{-k}.
$$
Lemma~\ref{lem:log_moment} implies that for some a.s.\ finite random variable $M$,
\begin{equation}\label{eq:upper_bound_1}
\left|\sum_{k\geq An}\xi_k f_{k,n} z^k\right|
\leq
M \sum_{k\geq An} e^{\eps k} |f_{k,n}| |z|^k
\leq
M \sum_{k\geq An} e^{-\eps k}
\leq
M.
\end{equation}
The last inequality holds if $n$ is sufficiently large. Note in passing that this implies that for large $n$ the series~\eqref{eq:def_GGG} converges with probability $1$.

We now consider the beginning of the Taylor series~\eqref{eq:def_GGG} defining $\GGG_n$. Take some $\delta>0$. By assumption~(A3), there is $N$ such that for all $n>N$ and all $k\leq An$,
\begin{equation}\label{eq:est_f_kn}
|f_{k,n}|<\left(f(k/n)+\delta\right)^n.
\end{equation}
It follows from~\eqref{eq:def_I} that for all $t\geq 0$,
\begin{equation}\label{eq:est_I_log_z}
t\log |z| + \log f(t) \leq I(\log |z|).
\end{equation}
Using~\eqref{eq:est_f_kn}, \eqref{eq:est_I_log_z} and Lemma~\ref{lem:log_moment} with $\eps/A$ instead of $\eps$ we obtain that there is an a.s.\ finite random variable $M'$ such that
\begin{align}
\left|\sum_{0\leq k< An}\xi_k f_{k,n} z^k\right|
&\leq
M' \sum_{0\leq k< An} e^{\frac{\eps k}{A}} \left(f\left(\frac kn\right)+\delta\right)^n  |z|^k \label{eq:upper_bound_2}\\
&\leq
M' e^{\eps n} \sum_{0\leq k< An} \left(e^{\frac kn\log |z|+\log f(\frac kn)}+ \delta |z|^{\frac k n}\right)^n \notag \\
&\leq
M' e^{2 \eps n} \left(e^{I(\log |z|)}+\delta |z|^A\right)^n \notag \\
&\leq
M' e^{3\eps n} e^{n I(\log |z|)}, \notag
\end{align}
where the last inequality holds if $\delta=\delta(\eps)$ is sufficiently small.
Combining~\eqref{eq:upper_bound_1} and~\eqref{eq:upper_bound_2}, we obtain that for large $n$,
\begin{equation}\label{eq:upper_bound_G}
|\GGG_n(z)|\leq M'  e^{n(I(\log |z|)+3\eps)} + M.
\end{equation}
Since $M$ and $M'$ are a.s.\ finite  by Lemma~\ref{lem:log_moment}, this implies~\eqref{eq:upper_bound}.

\subsection{Lower bound}
Fix $\eps>0$ and $z\in\DDD_{R_0}\backslash\{0\}$.  We show that
\begin{equation}\label{eq:proof_log_part_func_2}
\P\left[|\GGG_n(z)|< e^{n(I(\log |z|)-4\eps)}\right]=O\left(\frac 1 {\sqrt n}\right),
\;\;\;
n\to\infty.
\end{equation}

We will use the Kolmogorov--Rogozin inequality in a multidimensional form which can be found in~\cite{esseen}. Given a $d$-dimensional random vector $X$ define its concentration function by
\begin{equation}\label{eq:def_concentration_func}
Q(X; r) = \sup_{x\in \R^d} \P[X\in \DDD_r(x)], \qquad r>0,
\end{equation}
where $\DDD_r(x)$ is a $d$-dimensional ball of radius $r$ centered at $x$. An easy consequence of~\eqref{eq:def_concentration_func} is that for all independent random vectors $X,Y$ and all $r,a>0$,
\begin{equation}\label{eq:prop_concentration_func}
Q(X+Y; r)\leq Q(X; r),
\qquad
Q(aX;r)=Q(X; r/a).
\end{equation}
 The next result follows from Corollary~1 on p.~304 of~\cite{esseen}.
\begin{theorem}[Kolmogorov--Rogozin inequality]
\label{theo:kolmogorov-rogozin}
There is a constant $C_d$ depending only on $d$ such that for all independent (not necessarily identically distributed) random $d$-dimensional vectors $X_1,\ldots,X_n$ and for all $r>0$, we have
$$
Q(X_1+\ldots+X_n; r) \leq C_d \cdot \left( \sum_{k=1}^n (1-Q(X_k; r))\right)^{-1/2}.
$$
\end{theorem}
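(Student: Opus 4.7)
The plan is to follow the classical Fourier-analytic route of Esseen, with the one-dimensional argument upgraded by a standard multivariate smoothing lemma. The whole strategy rests on two independent ingredients that combine very cleanly: a bound that controls the concentration function $Q(X;r)$ from above by an $L^1$-norm of the characteristic function $\varphi_X(t)=\E e^{i\langle t,X\rangle}$ on a box of size $1/r$, and a pointwise bound that controls $|\varphi_X(t)|$ from above by $Q(X;r)$ on the same box. Factoring $\varphi_{S_n}=\prod_k \varphi_{X_k}$ for the sum $S_n=X_1+\ldots+X_n$ reduces the problem to estimating an integral of a product, and the factor $n^{-1/2}$ arises from a Gaussian-type integration.

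More concretely, I would first prove the smoothing inequality: there is a constant $c_d>0$ such that for every $d$-dimensional random vector $X$ and every $r>0$,
\begin{equation*}
Q(X;r)\leq c_d\, r^d \int_{[-1/r,1/r]^d} |\varphi_X(t)|\, dt.
\end{equation*}
The standard proof picks a non-negative, compactly supported function $h$ with $h(0)=1$ and Fourier transform supported in $[-1/r,1/r]^d$ (a product of Fej\'er-type kernels works), and writes $\P[X\in \DDD_r(x)]$ as a convolution integral that is bounded by the displayed quantity after Plancherel. Next, using independent symmetrization $\widetilde X=X-X'$, one deduces $|\varphi_X(t)|^2=\E\cos\langle t,\widetilde X\rangle$; combining this with the elementary inequality $1-\cos u\geq c\,u^2 \mathbbm{1}_{|u|\leq \pi}$ and with the definition of $Q$, one obtains a matching lower bound
\begin{equation*}
1-|\varphi_{X_k}(t)|^2 \geq c'_d\, (1-Q(X_k;r))\qquad\text{for all } |t|\leq 1/r,
\end{equation*}
which is the second ingredient.

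With both ingredients in place, the assembly is short. Apply the smoothing inequality to $S_n$, use the factorization $\varphi_{S_n}=\prod_k \varphi_{X_k}$, and estimate $\prod_k|\varphi_{X_k}(t)|\leq \exp\bigl(-\tfrac{1}{2}\sum_k(1-|\varphi_{X_k}(t)|^2)\bigr)$ via $1-x\leq e^{-x}$. Substituting the lower bound from the previous paragraph gives
\begin{equation*}
Q(S_n;r)\leq c_d\, r^d\int_{[-1/r,1/r]^d}\exp\!\left(-\tfrac{c'_d}{2}\,|t|^2 \cdot \tfrac{\sum_k(1-Q(X_k;r))}{\sup_{|s|\le 1/r}|s|^2}\right)dt
\end{equation*}
after bounding $(1-|\varphi_{X_k}(t)|^2)$ by $c'_d|t|^2(1-Q(X_k;r))$, which requires a small amount of additional care around $t=0$ but follows from the same symmetrization argument. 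Evaluating the Gaussian integral in each coordinate produces $d$ factors of $\bigl(\sum_k(1-Q(X_k;r))\bigr)^{-1/2}$, and since only a single factor is needed for the statement, trivially bounding all but one factor by an absolute constant yields the claim with a suitable $C_d$.

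I expect the main obstacle to be the second ingredient in dimension $d\geq 2$: the passage from $1-|\varphi_X(t)|^2\geq c_d'\,|t|^2(1-Q(X;r))$ on the cube $|t|\leq 1/r$ requires controlling a cosine integrated against a symmetrized distribution that may concentrate on a small subset, and one must justify the lower bound uniformly in the direction of $t$. This is where Esseen's argument uses a careful choice of auxiliary test set; once this geometric lemma is in hand the rest of the proof is routine Fourier analysis. An alternative, slightly cleaner route is to cite the one-dimensional Kolmogorov--Rogozin inequality and apply it to the projections $\langle u,X_k\rangle$ for an appropriate $u\in S^{d-1}$, but this only gives a $d$-dependent constant after a covering argument, which is precisely what the theorem allows.
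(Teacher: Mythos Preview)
The paper does not prove this theorem: it is quoted verbatim as a tool, with the sentence ``The next result follows from Corollary~1 on p.~304 of~\cite{esseen}'' and no argument of its own. So there is no ``paper's proof'' to compare against; you are attempting to supply what the paper deliberately outsources.

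Your outline follows the correct Fourier-analytic architecture (Esseen smoothing lemma, factorization, exponential bound), but the ``second ingredient'' as you state it is false, and this is a genuine gap, not a technicality. The pointwise inequality
\[
1-|\varphi_{X_k}(t)|^2 \;\geq\; c'_d\,(1-Q(X_k;r))\qquad\text{for all }|t|\leq 1/r
\]
fails already at $t=0$. The amended version with an extra factor $|t|^2 r^2$ that you invoke in the assembly step is also false: in dimension $1$, let $X_k$ be uniform on $\{-a,a\}$ with $a\gg r$, so that $Q(X_k;r)=\tfrac12$; then $\varphi_{X_k}(t)=\cos(at)$, and at $t=\pi/a\in(0,1/r)$ one has $1-|\varphi_{X_k}(t)|^2=0$ while your lower bound is strictly positive. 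There is no pointwise lower bound of this type.

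A structural indication that something is wrong: if your Gaussian integration went through as written, it would yield $Q(S_n;r)\leq C_d\bigl(\sum_k(1-Q(X_k;r))\bigr)^{-d/2}$, which is strictly stronger than the theorem for $d\ge 2$ and is false in general. Take all $X_k$ supported on a fixed line in $\R^d$; then $S_n$ lives on that line and its concentration function is governed by the one-dimensional inequality, so the exponent cannot be improved beyond $-1/2$.

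Esseen's actual argument does not use a pointwise bound on $1-|\varphi_{X_k}(t)|^2$. The key input is an \emph{averaged} estimate: after symmetrization and truncation of $\widetilde X_k$ at scale $r$, one exploits that $\tfrac{1}{T}\int_{|t|\le T}(1-\cos\langle t,x\rangle)\,dt$ is bounded below uniformly for $|x|\ge c/T$, which converts $1-Q(X_k;r)$ into control of the \emph{integral} of $\exp(-\tfrac12\sum_k(1-|\varphi_{X_k}(t)|^2))$ rather than of the integrand. You correctly flag this step as the place ``where Esseen's argument uses a careful choice of auxiliary test set,'' but your sketch does not carry it out, and the substitute you propose does not work. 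Your alternative route via one-dimensional projections also has a gap: projecting gives $Q(\langle u,X_k\rangle;r)\ge Q(X_k;r)$, which is the wrong direction for bounding $\sum_k(1-Q(\langle u,X_k\rangle;r))$ from below; a covering argument alone does not repair this.
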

The idea of our proof of~\eqref{eq:proof_log_part_func_2} is to use the Kolmogorov--Rogozin inequality to show that the probability of very strong cancellation among the terms of the series~\eqref{eq:def_GGG} defining $\GGG_n$ is small. First, we have to single out those terms of $\GGG_n$ in which the coefficient $f_{k,n}$ is large enough.
By definition of $I$, see~\eqref{eq:def_I}, there is $t_0\in[0, T_0]$  such that $t_0\log |z|+\log f(t_0)>I(\log |z|)-\eps$. Moreover, by assumptions~(A2), we can find a closed interval $J$ of length $|J|>0$ containing $t_0$ such that
$$
f(t) |z|^t >e^{I(\log |z|)-2\eps},\;\;\; t\in J.
$$
Define a set $\JJJ_n=\{k\in\N_0: k/n \in J\}$. By assumption~(A3) there is $N$ such that for all $n>N$ and all $k\in \JJJ_n$,
$$
|f_{k,n}| |z|^k > e^{n(I(\log |z|)-3\eps)}.
$$
 For $k=0,\ldots,n$ define $$
a_{k,n}=e^{-n(I(\log |z|)-3\eps)} f_{k,n} z^k.
$$
Note that $|a_{k,n}| > 1$ for $k\in \JJJ_n$.
Define
$$
\GGG_{n,1}
=
\sum_{k\in \JJJ_n} a_{k,n}\xi_k,
\qquad
\GGG_{n,2}
=
\sum_{k\notin \JJJ_n} a_{k,n}\xi_k.
$$
By taking the real and imaginary parts we can view the complex random variables $a_{k,n}\xi_k$ as two-dimensional random vectors.
Using~\eqref{eq:prop_concentration_func} we arrive at
\begin{equation}\label{eq:proof_log_part_func_2a}
\P[|\GGG_n(z)|< e^{n(I(\log |z|)-4\eps)}]
\leq
Q(\GGG_{n,1}+\GGG_{n,2}; e^{-\eps n})
\leq
Q(\GGG_{n,1}; e^{-\eps n}).
\end{equation}
By Theorem~\ref{theo:kolmogorov-rogozin}, there is an absolute constant $C$ such that for all $r>0$,
$$
Q(\GGG_{n,1}; r)
\leq
C\cdot \left(\sum_{k\in \JJJ_n}(1-Q(a_{k,n}\xi_k ; r)) \right)^{-1/2}
\leq
C\cdot \left(\sum_{k\in \JJJ_n}(1-Q(\xi_k ; r)) \right)^{-1/2}.
$$
Here, the second inequality follows from the fact that $|a_{k,n}| > 1$. Now, since the random variable $\xi_0$ is supposed to be non-degenerate, we can choose $r>0$ so small that $Q(\xi_0; r)<1$. Note that this is the only place in the proof of Theorem~\ref{theo:general} where we use randomness. The rest of the proof is valid for any deterministic sequence $\xi_0,\xi_1,\ldots$ such that $|\xi_n|=O(e^{\delta n})$ for every $\delta>0$.  If $n$ is sufficiently large, then $e^{-\eps n}\leq r$ and hence,
\begin{equation}\label{eq:proof_log_part_func_2b}
Q(\GGG_{n,1}; e^{-\eps n})\leq Q(\GGG_{n,1}; r)
\leq C_1  |\JJJ_n|^{-1/2}
\leq C_2 n^{-1/2}.
\end{equation}
In the last inequality we have used that the number of elements of $\JJJ_n$ is larger than $(|J|/2) n$ for large $n$. Taking~\eqref{eq:proof_log_part_func_2a} and~\eqref{eq:proof_log_part_func_2b} together completes the proof of the lower bound~\eqref{eq:proof_log_part_func_2}.

\section{Proof of Theorem~\ref{theo:general} and related results}\label{sec:proof_general}
\subsection{Proof of Theorem~\ref{theo:general}}
Recall that $\mu_{\GGG_n}$ is the measure counting the zeros of $\GGG_n$. Our aim is to show that for every smooth, compactly supported function $\varphi: \DDD_{R_0} \to\R$,
\begin{equation}\label{eq:proof_theo_general}
\frac 1n \int_{\DDD_{R_0}} \varphi(z)\mu_{\GGG_n}(dz)
\toprobab
\int_{\DDD_{R_0}} \varphi(z) \mu(dz).
\end{equation}
Let $E_n$ be the event $\GGG_n\equiv 0$. The left-hand side of~\eqref{eq:proof_theo_general} is not well-defined on $E_n$. However, we will argue that $\lim_{n\to\infty} \P[E_n]=0$. By assumptions~(A1), (A2) and~(A3), if $n$ is large, then $f_{k,n}>0$ for all $k<T_0n/2$. Also, $\P[\xi_0=0]<1$. It follows that $\lim_{n\to\infty} \P[E_n]=0$. In our proof of~\eqref{eq:proof_theo_general} we may restrict ourselves to the complement of $E_n$.

It is known, see~\cite{peres_book}, that  on the complement of $E_n$,
\begin{equation}\label{eq:zeros_laplacian1}
\int_{\DDD_{R_0}} \varphi(z)\mu_{\GGG_n}(dz)
=
\frac 1 {2\pi} \int_{\DDD_{R_0}} \Delta \varphi(z)  \log |\GGG_n(z)| \lambda(dz).
\end{equation}
This is just a restatement of~\eqref{eq:mu_GGG_is_Laplace_log_GGG}. From Theorem~\ref{theo:log_asympt} we know that for every $z\in\DDD_{\R_0}\backslash\{0\}$, the random variable $p_n(z):=\frac 1n \log |\GGG_n(z)|$ converges to $p(z):=I(\log |z|)$ in probability.  Assuming for a moment that we can pass to the limit under the sign of integral (which will be justified later), we have
\begin{equation}\label{eq:Sn_varphi_converges}
\frac 1n \int_{\DDD_{R_0}} \varphi(z)\mu_{\GGG_n}(dz)
\toprobab
\frac 1 {2\pi} \int_{\DDD_{R_0}} \Delta \varphi(z) p(z) \lambda(dz).
\end{equation}
This proves Theorem~\ref{theo:general}, but with the formula  $\frac 1 {2\pi}\Delta (I(\log |z|))$ for the limiting distribution of zeros, as in Remark~\ref{rem:mu_laplace_I_log}. We prove now that the limiting distribution of zeros can be given by~\eqref{eq:theo:general}. We claim that with $\mu$ defined by~\eqref{eq:theo:general},
\begin{equation}\label{eq:laplace_selfadjoint}
\frac 1 {2\pi} \int_{\DDD_{R_0}} \Delta \varphi(z) p(z) \lambda(dz)=\int_{\DDD_{R_0}} \varphi(z)\mu(dz).
\end{equation}
If $I$ is smooth, then by Green's identity, the left-hand side of~\eqref{eq:laplace_selfadjoint} is equal to
$$
\frac 1 {2\pi}
\int_{\DDD_{R_0}} \varphi(z) \Delta (I(\log |z|)) \lambda(dz)
=
\int_{\DDD_{R_0}} \varphi(z) \frac{I''(\log |z|)}{2\pi |z|^2} \lambda(dz)
=
\int_{\DDD_{R_0}} \varphi(z)\mu(dz).
$$
If $I$ is not smooth, we can find a sequence of non-decreasing, smooth functions $I'_1,I'_2,\ldots$ such that $I_n'\leq I'$ and $\lim_{n\to\infty}I_n'(s)=I'(s)$ for all $s<\log R_0$ where $I'$ is continuous. By dominated convergence, $I_n(s):=\int_{-\infty}^s I_n'(t)dt \to I(s)$ as $n\to\infty$. For each $I_n$ we can use the Green's identity as above, and then let $n\to\infty$ to obtain~\eqref{eq:laplace_selfadjoint} in full generality.

\subsection{Dominated convergence}
It remains to justify the interchanging of the limit and the integral when passing from~\eqref{eq:zeros_laplacian1} to~\eqref{eq:Sn_varphi_converges}. Recall that a sequence of random variables $X_n$ is bounded in probability (or tight) if for every $\eps>0$ we can find $A=A(\eps)$ such that $\P[|X_n|>A]<\eps$ for all $n\in\N$. Recall also that $X_n$ is called bounded a.s.\ if $\limsup_{n\to\infty}|X_n|<\infty$ a.s.
We need a lemma  from~\cite{tao_vu}.
\begin{lemma}[Lemma~3.1 in~\cite{tao_vu}]\label{lem:tao_vu}
Let $(X, \mathcal A, \nu)$ be a finite measure space. Let $f_n:X\to\R$  be random functions defined on a probability space $(\Omega, \mathcal B, \P)$ which are jointly measurable with respect to $\mathcal A\otimes \mathcal B$. Assume that for $\nu$-a.e.\ $x\in X$ we have $f_n(x)\to 0$ in probability (resp., a.s.) and that the sequence $\int_X |f_n(x)|^{1+\delta} \nu(dx)$ is bounded in probability (resp., a.s.) for some $\delta>0$. Then, $\int_X f_n(x)\nu(dx)$ converges in probability (resp., a.s.) to $0$.
\end{lemma}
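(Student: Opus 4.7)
The plan is to reduce the lemma to showing $\int_X |f_n(x)|\,\nu(dx)\to 0$ in the relevant mode, and then to control this $L^1(\nu)$-norm by splitting $|f_n|$ at a threshold $K$ and sending $K\to\infty$. The mechanism is classical: the $L^{1+\delta}$-bound supplies uniform integrability, which, combined with vanishing of bounded truncations, yields $L^1$-convergence in the spirit of Vitali's theorem.

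For the truncation step I would work with $g_n^K(x):=|f_n(x)|\wedge K$. By continuous mapping, $g_n^K(x)\to 0$ in probability (resp.\ a.s.) for $\nu$-a.e.\ $x$. Since $0\leq g_n^K\leq K$, bounded convergence in $\P$ gives $\E[g_n^K(x)]\to 0$ for $\nu$-a.e.\ $x$; then bounded convergence in $\nu$ (legitimate because $\nu$ is finite and the integrands are uniformly bounded by $K$) together with Fubini--Tonelli (legitimate by joint $\mathcal A\otimes\mathcal B$-measurability) yield
$$
\E\int_X g_n^K(x)\,\nu(dx)=\int_X \E[g_n^K(x)]\,\nu(dx)\ton 0.
$$
In particular $\int_X g_n^K\,d\nu\to 0$ in $L^1(\P)$, hence in probability.

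For the tail I use the elementary pointwise inequality $|f_n|\,\ind_{\{|f_n|>K\}}\leq K^{-\delta}|f_n|^{1+\delta}$, which gives, for each $\omega$,
$$
\int_X |f_n|\,d\nu\;\leq\;\int_X g_n^K\,d\nu\;+\;K^{-\delta}\int_X |f_n|^{1+\delta}\,d\nu.
$$
Given $\eta,\eps>0$: tightness of $\int_X|f_n|^{1+\delta}\,d\nu$ furnishes $M$ with $\P[\int_X|f_n|^{1+\delta}d\nu>M]<\eps/2$ uniformly in $n$; choose $K$ so large that $K^{-\delta}M<\eta/2$; then by the previous paragraph choose $n_0$ with $\P[\int_X g_n^K\,d\nu>\eta/2]<\eps/2$ for $n\geq n_0$. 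Combined with $|\int_X f_n\,d\nu|\leq\int_X|f_n|\,d\nu$, this gives $\P[|\int_X f_n\,d\nu|>\eta]<\eps$ for all large $n$, proving the in-probability conclusion.

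The almost-sure case is in fact the simpler one: joint measurability plus Fubini let one find a $\P$-null set outside which simultaneously $f_n(\cdot,\omega)\to 0$ $\nu$-a.e.\ and $\sup_n\int_X|f_n(x,\omega)|^{1+\delta}\,\nu(dx)<\infty$. On this full-measure event one applies the same split-at-$K$ inequality pathwise (with $M=M(\omega)$), combined with deterministic dominated convergence for $\int_X g_n^K\,d\nu$, to conclude $\int_X f_n(\cdot,\omega)\,d\nu\to 0$. The only mild obstacle, and the reason joint measurability is explicitly required, is the bookkeeping needed to make every Fubini exchange and every extraction of a good event legitimate; once that is set up, the rest is the standard Vitali-type argument.
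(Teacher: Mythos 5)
Your proof is correct, and it matches the standard (and, as far as this paper is concerned, the only available) argument: the paper does not prove Lemma~\ref{lem:tao_vu} itself but simply cites it from~\cite{tao_vu}, where the proof is also a truncation-plus-uniform-integrability argument of Vitali type. The one small bookkeeping remark is that in the almost-sure branch the hypothesis, as the paper defines ``bounded a.s.'', gives $\limsup_n \int_X|f_n|^{1+\delta}\,d\nu<\infty$ rather than $\sup_n<\infty$; this is immaterial, since on the good event one simply takes $M(\omega)=\sup_{n\ge N}\int_X|f_n(\cdot,\omega)|^{1+\delta}\,d\nu$ for $N$ large, and the split-at-$K$ estimate yields $\limsup_n\int_X|f_n(\cdot,\omega)|\,d\nu\le K^{-\delta}M(\omega)\to 0$ as $K\to\infty$, exactly as you indicate.
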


Recall that the function $\varphi$ vanishes outside some disk  $\DDD_r$, where $r<R_0$. By Lemma~\ref{lem:tao_vu}, the passage from~\eqref{eq:zeros_laplacian1} to~\eqref{eq:Sn_varphi_converges} is justified if we show that the sequence of random variables
\begin{equation}\label{eq:def_Bn}
B_n:= \int_{\DDD_{r}} |\Delta \varphi(z)|^2 |p_n(z)-p(z)|^2 \lambda(dz)
\end{equation}
is tight.  Since $\varphi$ is bounded on $\DDD_r$, we have
$$
B_n\leq C \int_{\DDD_r} |p(z)|^2 \lambda(dz) + C\int_{\DDD_r} |p_n(z)|^2 \lambda(dz).
$$
The first summand on the right-hand side is a finite constant since $0\leq I(\log |z|)\leq I(\log r).$
To complete the proof of Theorem~\ref{theo:general}  we need to  demonstrate the tightness of the sequence
\begin{equation}\label{eq:def_tilde_Bn}
\tilde B_n:=\frac 1 {n^2} \int_{\DDD_r} (\log |\GGG_n(z)|)^2 \lambda(dz).
\end{equation}

Let us consider first the case in which $\GGG_n$ is a polynomial of degree $n$ and $f(1)\neq 0$. This applies for example in the setting of Theorem~\ref{theo:LO_poly}. Denoting by $w_{1n},\ldots,w_{nn}$ the zeros of $\GGG_n$ we can write
$$
\GGG_n(z) = \xi_n f_{n,n}  (z-w_{1n})\ldots (z-w_{nn}).
$$
Using the inequality of the  arithmetic and quadratic means two times we obtain
$$
\tilde B_n\leq \frac {2\pi r^2}{n^2}(\log |f_{n,n}| + \log |\xi_n|)^2 + \frac 2 n \sum_{k=1}^n \int_{\DDD_r} \log^2 |z-w_{kn}| \lambda(dz).
$$
Recall that $\lim_{n\to\infty}\frac 1n \log |f_{n,n}|=\log f(1)$ is finite. Hence, the first term on the right-hand side is bounded in probability.
The second term is bounded by a deterministic constant since $\int_{\DDD_r} \log^2 |z-w| \lambda(dz)\leq C$ for some constant $C=C(r)$ not depending on $w\in\C$.

In the general setting the proof of tightness of $\tilde B_n$ is more involved. The main difficulty is that $\log |\GGG_n|$ becomes infinite at zeros of $\GGG_n$. Thus, we have to show that with high probability, $\GGG_n$ has not too many zeros. We will prove that $\tilde B_n$ is bounded a.s.:
\begin{equation}\label{eq:tao_vu_cond_as}
\limsup_{n\to\infty} \tilde B_n < \infty \text{ a.s.}
\end{equation}

If $\GGG_n(0)=0$ we let $\tau_n$ be the multiplicity of the zero at $0$.
Write $\GGG_n^*(z)=\GGG_n(z)/z^{\tau_n}$ and define $\GGG_n^*(0)\neq 0$ by continuity.
Let $M_n(R)=\sup_{|z|=R} |\GGG_n^*(z)|$, where $R<R_0$. First of all, it follows from~\eqref{eq:upper_bound_G} that
\begin{equation}\label{eq:M_n_upper_bound}
D_1(R):=\limsup_{n\to\infty} \frac 1n \log M_n(R) < \infty \text{ a.s.}.
\end{equation}
Note that $M'$ and $M$ there do not depend on $\arg z$. Let $N_n(R)$ be the number of zeros of $\GGG_n^*$ in the disk $\DDD_R$, counting multiplicities.   Denote by $a_{1,n},\ldots, a_{N_n(R), n}$ the zeros of $\GGG_n^*$ in the disk $\DDD_R$. The Poisson--Jensen formula, see, e.g., \cite[Chapter~8]{markushevich_book}, states that for every $z\in \DDD_r$ and every $R>r$ we have
\begin{equation}\label{eq:poisson_jensen}
\log \left|\GGG_n^*(z)\right|
=
I_n(z;R)
+
\sum_{k=1}^{N_n(R)} \log \left|\frac{R(z-a_{k,n})}{R^2-\bar a_{k, n} z} \right|,
\end{equation}
where
\begin{equation}\label{eq:poisson_integral}
I_n(z;R)=\frac 1 {2\pi} \int_{0}^{2\pi} \log |\GGG_n^*(R e^{i\theta})|P_R(|z|, \theta-\arg z)d\theta
\end{equation}
and $P_R$ is the Poisson kernel:
$$
P_R(\rho, \varphi)=\frac{R^2-\rho^2}{R^2+\rho^2-2R\rho \cos \varphi}.
$$
Fix a small $\delta>0$. Note that $0\leq P_{r+\delta}(\rho, \varphi)<C$ for all $\rho<r$ and $\varphi\in[0,2\pi]$, where $C$ depends only on $r$ and $\delta$. It follows that
$
I_n(z; r+\delta)\leq C \log M_n(r+\delta)
$
for all $z \in\DDD_r$ and hence, by~\eqref{eq:M_n_upper_bound},
\begin{equation}\label{eq:I_n_upper_bound}
D_2:=\limsup_{n\to\infty} \frac 1n \sup_{z\in\DDD_r} I_n(z;r+\delta) < \infty \text{ a.s.}
\end{equation}
We will show that
\begin{equation}\label{eq:probab_too_many_zeros}
\limsup_{n\to\infty} \frac 1n N_n(r+\delta) < \infty \text{ a.s.}
\end{equation}
It follows from the Poisson--Jensen formula~\eqref{eq:poisson_jensen} with $R=r+2\delta$ and $z=0$ that
\begin{equation}\label{eq:poisson_jensen_number_zeros}
N_n(r+\delta) \leq \frac 1 {\log \frac{r+2\delta}{r+\delta}} \log \left|\frac{M_n(r+2\delta)}{ \GGG_n^*(0)}\right|.
\end{equation}
Indeed, any zero of $\GGG_n^*$ in $\DDD_{r+\delta}$ gives contribution at most $\log \frac{r+\delta}{r+2\delta}$ to the sum on the right-hand side of~\eqref{eq:poisson_jensen} and $I_n(0;R)\leq M_n(R)$. Clearly, $\GGG_n^*(0)=f_{\tau_n,n}\xi_{\tau_n}$. Let $\tau=\min\{n: \xi_n\neq 0\}$ be the index of the first non-zero $\xi_n$.
Note that $\tau<\infty$ a.s.
By~(A1), (A2), (A3), for sufficiently large $n$ we have $f_{k,n}>0$ for all $k\in [0, T_0 n/2]$.  It follows that $\tau_n=\tau$ for sufficiently large $n$.  Hence,
\begin{equation}\label{eq:log_GGG_n_star_lower_bound}
\liminf_{n\to\infty}\frac 1n \log |\GGG_n^*(0)|=0 \text{ a.s.}
\end{equation}
Applying~\eqref{eq:M_n_upper_bound} and~\eqref{eq:log_GGG_n_star_lower_bound} to the right-hand side of~\eqref{eq:poisson_jensen_number_zeros} we arrive at~\eqref{eq:probab_too_many_zeros}.

Next we show that
\begin{equation}\label{eq:I_n_lower_bound}
D_3:=\liminf_{n\to\infty} \frac 1n \inf_{z\in\DDD_r} I_n(z;r+\delta) > -\infty \text{ a.s.}
\end{equation}
It is known that $I_n(0;R)=\frac 1 {2\pi} \int_0^{2\pi} \log |\GGG_n^*(R e^{i\theta})|d\theta$ is non-decreasing in $R$. Note that $I_n(0;0)=\log |\GGG_n^*(0)|$. Hence, by~\eqref{eq:log_GGG_n_star_lower_bound},
\begin{equation}\label{eq:I_n_zero_lower_bound}
\liminf_{n\to\infty} \frac 1n I_n(0;R) > -\infty \text{ a.s.}
\end{equation}
From now on we set $R=r+\delta$. Let $q_n(\theta)=\frac 1n \log |\GGG_n^*(Re^{i\theta})|$ and write
$$
q_n^+(\theta)= \max (q_n(\theta),0),\;\;\;
q_n^-(\theta)=-\min (q_n(\theta),0).
$$
Then, $q_n(\theta)=q_n^+(\theta)-q_n^-(\theta)$. Note that there is a constant $C>1$ depending only on $r,\delta$ such that $1/C<P_{R}(\rho, \varphi)<C$ for all $\rho<r$ and $\varphi\in [0,2\pi]$.
We have, for all $z\in\DDD_r$,
\begin{align*}
\frac {2\pi}n I_n(z;R)
&=
\int_0^{2\pi} q_n^+(\theta)P_R(|z|,\theta-\arg z)d\theta
- \int_0^{2\pi} q_n^-(\theta)P_R(|z|,\theta-\arg z)d\theta\\
&\geq
\frac 1 C\int_0^{2\pi} q_n^+(\theta)d\theta -
C\int_0^{2\pi} q_n^-(\theta)d\theta\\
&=
\frac {2\pi C} n I_n(0;R)  - \left(C-\frac 1 C\right)\int_0^{2\pi} q_n^+(\theta)d\theta\\
&\geq
\frac {2\pi C} n I_n(0;R)- \left(C-\frac 1 C\right)\frac{2\pi}{n} \log M_n(R).
\end{align*}
Recalling~\eqref{eq:M_n_upper_bound} and~\eqref{eq:I_n_zero_lower_bound} we arrive at~\eqref{eq:I_n_lower_bound}.

We are ready to prove that the sequence $\tilde B_n$ is bounded a.s.  Applying the inequality of the arithmetic and quadratic means to~\eqref{eq:poisson_jensen}, we obtain
$$
(\log |\GGG_n(z)|)^2
\leq
3(\tau_n \log |z|)^2 + 3I_n^2(z;R)+ 3N_n(R)\sum_{k=1}^{N_n(R)}  \log^2 \left|\frac{R(z-a_{k,n})}{R^2-\bar a_{k, n} z} \right|.
$$
There is a constant $C$ depending only on $r,\delta$ such that for all $a\in \DDD_R$,
$$
\int_{\DDD_r} \log^2 \left|\frac{R(z-a)}{R^2-\bar a z} \right|\lambda(dz)\leq C.
$$
Recalling~\eqref{eq:def_tilde_Bn} we have, for some constant $C$ depending only on $r,\delta$,
\begin{align*}
\tilde B_n
&\leq
\frac C{n^2}\left( \tau_n^2 + \sup_{z\in\DDD_r} I_n^2(z;R)  + N_n^2(R)\right).
\end{align*}
Recall that $\tau_n=\tau$ for sufficiently large $n$. Utilizing~\eqref{eq:I_n_upper_bound}, \eqref{eq:I_n_lower_bound}, \eqref{eq:probab_too_many_zeros}  we arrive at~\eqref{eq:tao_vu_cond_as}.
The sequence $\tilde B_n$ is bounded a.s.\ and hence, tight.

\subsection{Proof of the a.s.\ convergence in Theorem~\ref{theo:LO_poly}} \label{subsec:proof_LO_poly_as}
Convergence in probability has already been established in Section~\ref{sec:proofs_special_cases}. Given $n\in\N$ we can find a unique $j_n\in\N$ such that $j_n^3\leq n< (j_n+1)^3$. Write $m_n=j_n^3$ and $\GGG_n(z)=\WWW_n(e^{\beta} m_n^{\alpha}z)$. Note that $\lim_{n\to\infty}m_n/n =1$. We will show that $\frac 1n \mu_{\GGG_n}$ converges a.s.\ to the measure with density~\eqref{eq:LO_poly_density}. To this end, we need to prove the a.s.\ convergence of the corresponding potentials. Fix $z\in\DDD$. We will prove that
\begin{equation}\label{eq:LO_as_conv_potentials}
\frac 1n \log | \GGG_n(z)| \toas \alpha |z|^{1/\alpha}.
\end{equation}
Note that $\GGG_n$ satisfies all assumptions of Section~\ref{subsec:general_theo}.  It follows from~\eqref{eq:upper_bound_G} that
$$
\limsup_{n\to\infty} \frac 1n \log |\GGG_n(z)|\leq \alpha |z|^{1/\alpha} \text{ a.s.}
$$
Thus, we have to prove only the lower bound in~\eqref{eq:LO_as_conv_potentials}. Fix a small $\eps>0$. It follows from~\eqref{eq:proof_log_part_func_2} and the Borel--Cantelli lemma applied to the subsequence $\{j^3\}_{j\in\N}$ that  with probability $1$ for all but finitely many $n\in\N$,
\begin{equation}\label{eq:as_conv_lower_subseq}
|\GGG_{m_n}(z)| > e^{m_n(\alpha |z|^{1/\alpha}-\eps)}.
\end{equation}
Let now $n$ be a number not of the form $j^3$.  We have, by Lemma~\ref{lem:log_moment} and~\eqref{eq:fk_asympt},
\begin{align*}
|\GGG_n(z) - \GGG_{m_n}(z)|
&=
\left|\sum_{k=m_n+1}^{n} \xi_k w_k e^{\beta k} m_n^{\alpha k} z^k\right|\\
&\leq
M e^{2\eps n} \sum_{k=m_n+1}^{n} e^{-\alpha(k\log k-k)} n^{\alpha k} |z|^k.
\end{align*}
The function $x\mapsto -\alpha(x\log x-x)+\alpha x \log n$ defined for $x>0$ attains its maximum, which is equal to $\alpha n$, at $x=n$. Recall that $|z|<1$. Since $m_n>(1-\eps)n$ and $M<e^{\eps n}$ if $n$ is sufficiently large, we have the estimate
$$
|\GGG_n(z)-\GGG_{m_n}(z)| \leq e^{3\eps n} e^{\alpha n} |z|^{(1-\eps) n}.
$$
Since $\alpha+\log |z| < \alpha |z|^{1/\alpha}$, we have, if $\eps>0$ is small enough,
\begin{equation}\label{eq:as_conv_lower_diff}
|\GGG_n(z)-\GGG_{m_n}(z)| \leq e^{(1-\eps)n(\alpha |z|^{1/\alpha}-2\eps)}\leq e^{m_n (\alpha |z|^{1/\alpha}-2\eps)}.
\end{equation}
Bringing~\eqref{eq:as_conv_lower_subseq} and~\eqref{eq:as_conv_lower_diff} together we obtain that with probability $1$ for all but finitely many $n$,
$
|\GGG_n(z)|\geq  e^{m_n (\alpha |z|^{1/\alpha}-2\eps)}.
$
This is the required lower bound in~\eqref{eq:LO_as_conv_potentials}.

We are ready to complete the proof. We need to show that $\frac 1n \mu_{\GGG_n}$ converges a.s.\ to a measure $\mu$ with density~\eqref{eq:LO_poly_density}. Take any smooth function $\varphi:\C\to\R$ having a support contained in $\DDD$. Write $p_n(z)=\frac 1n \log |\GGG_n(z)|$. As in~\eqref{eq:zeros_laplacian1} we have
$$
S_n(\varphi)
:=\frac 1n \sum_{z\in\C: \GGG_n(z)=0}\varphi(z)
=\frac 1{2\pi} \int_{\DDD} \Delta \varphi(z) p_n(z)\lambda(dz).
$$
We have shown in~\eqref{eq:LO_as_conv_potentials} that for every $z\in\DDD$, $p_n(z)$ converges to $p(z)=\alpha |z|^{1/\alpha}$ a.s. Assuming that interchanging the limit and the integral is possible, we arrive at
\begin{equation}\label{eq:zeros_laplacian_as}
S_n(\varphi) \toas
\frac 1{2\pi} \int_{\C} \Delta \varphi(z) p(z)\lambda(dz)
=
\int_{\C} \varphi(z) \mu(dz).
\end{equation}
The last step follows from~\eqref{eq:laplace_selfadjoint}. To justify the interchanging of the limit and the integral we use Lemma~3.1 of~\cite{tao_vu}. To use the lemma, we need to show that $B_n$ defined in~\eqref{eq:def_Bn} (or, equivalently, $\tilde B_n$ defined by~\eqref{eq:def_tilde_Bn}) is bounded a.s. This means that $\limsup_{n\to\infty}\tilde B_n<\infty$ a.s. But we have already verified this in~\eqref{eq:tao_vu_cond_as}.

Unfortunately, we were able to establish~\eqref{eq:LO_as_conv_potentials} for $z\in\DDD$ only. That is why the support of $\varphi$ was restricted to $\DDD$. To complete the proof we have only to get rid of this assumption. Given a small $\eps>0$ let $\psi_{\eps}:\C\to [0,1]$ be a smooth function which is $1$ on $\DDD_{1-2\eps}$ and $0$ outside $\DDD_{1-\eps}$. Let $N_n(r)$ be the number of zeros of $\GGG_n$ inside $\DDD_{r}$. Then, by~\eqref{eq:zeros_laplacian_as},
\begin{equation}\label{eq:proof_as_outside_disk}
\frac 1n N_n(1-\eps)
\geq
S_n(\psi_{\eps})
\toas
\int_{\DDD} \psi_{\eps}(z)\mu(dz)
\geq
\mu(\DDD_{1-2\eps})
=
(1-2\eps)^{1/\alpha}.
\end{equation}
Let now $\varphi$ be an arbitrary smooth compactly supported function on $\C$. Write $\varphi=\varphi_{1,\eps}+\varphi_{2,\eps}$, where $\varphi_{1,\eps}=\varphi\cdot \psi_{\eps}$ is smooth with support in $\DDD_{1-\eps}$ and $\varphi_{2,\eps}=\varphi\cdot (1-\psi_{\eps})$ is smooth with support in $\C\backslash \DDD_{1-2\eps}$. Then, by~\eqref{eq:zeros_laplacian_as} and~\eqref{eq:proof_as_outside_disk},
$$
S_n(\varphi_{1,\eps})\toas \int_{\DDD} \varphi_{1,\eps}(z) \mu(dz), \;\;\;
\limsup_{n\to\infty} S_n(\varphi_{2,\eps}) \leq (1-(1-4\eps)^{1/\alpha}) \|\varphi\|_{\infty}.
$$
Note that $\varphi_{1,\eps}$ coincides with $\varphi$ on $\DDD_{1-2\eps}$ and hence, $\int_{\DDD} \varphi_{1,\eps}(z) \mu(dz)$ converges to $\int_{\DDD} \varphi(z) \mu(dz)$  as $\eps\downarrow 0$. Since $\eps>0$ was arbitrary, it follows that $S_n(\varphi)=S_n(\varphi_{1,\eps})+S_n(\varphi_{2,\eps})$ converges a.s.\ to $\int_{\DDD} \varphi(z)\mu(dz)$. This completes the proof.

\subsection{Proof of the a.s.\ convergence in Theorem~\ref{theo:LO_entire}}\label{subsec:proof_LO_entire_as}
Let $m_n$ be defined in the same way as in the previous proof. Write $\GGG_n(z)=\WWW(e^{\beta} m_n^{\alpha}z)$. We will show that the sequence of random measures $\frac 1n \mu_{\GGG_n}$ converges a.s.\ to the measure with density~\eqref{eq:LO_entire_density}. This implies Theorem~\ref{theo:LO_entire} since $\lim_{n\to\infty}m_n/n=1$. Fix $z\in\C$. We show that
\begin{equation}\label{eq:LO_as_conv_potentials_entire}
\frac 1n \log | \GGG_n(z)| \toas \alpha |z|^{1/\alpha}.
\end{equation}
Note that $\GGG_n$ satisfies the  assumptions of Section~\ref{subsec:general_theo} with $I(s)=\alpha e^{s/\alpha}$ there.   From~\eqref{eq:upper_bound_G} it follows that
$$
\limsup_{n\to\infty} \frac 1n \log |\GGG_n(z)|\leq \alpha |z|^{1/\alpha} \text{ a.s.}
$$
We prove the lower bound in~\eqref{eq:LO_as_conv_potentials_entire}. Fix small $\eps>0$. It follows from~\eqref{eq:proof_log_part_func_2} and the Borel--Cantelli lemma applied to the subsequence $\{j^3\}_{j\in\N}$ that  with probability $1$ for all but finitely many $n\in\N$,
$$
|\GGG_{m_n}(z)| > e^{m_n(\alpha |z|^{1/\alpha}-\eps)}.
$$
However, $\GGG_{m_n}(z)=\GGG_n(z)$ by definition. Also, $\lim_{n\to\infty} m_n/n=1$. This proves the lower bound in~\eqref{eq:LO_as_conv_potentials_entire}. The rest of the proof is the same as in Theorem~\ref{theo:LO_poly}, but we don't need to worry about the case $z\notin \DDD$.

\subsection{Proof of Theorem~\ref{theo:LO_poly_converse}}
Let $\WWW_n(z)=\sum_{k=0}^n \xi_k w_k z^k$, where $w_k$ is a sequence satisfying~\eqref{eq:fk_asympt} and~\eqref{eq:LO_additional_assumpt}.
Assume that $\E \log(1+|\xi_0|)=\infty$.  Fix $\eps>0$. We will show that with probability $1$ there exist infinitely many $n$ such that all zeros of $\WWW_n(e^{-\beta}n^{-\alpha} z)$ are located in the disk $\DDD_{\eps}$. We use an idea of~\cite{iz_log}.
By Lemma~\ref{lem:log_moment}, $\limsup_{n\to\infty} |\xi_n|^{1/n}=+\infty$. Hence, with probability $1$ there exist infinitely many $n$ such that
\begin{equation}\label{eq:no_log_moment_super_exp}
|\xi_n|^{\frac 1 n}>\max_{k=1,\ldots,n-1} |\xi_{n-k}|^{\frac 1{n-k}},
\;\;\;
|\xi_n|^{\frac 1n}>\max\left\{\frac{3C+1}{\eps}, \frac 1 {e^{\alpha}\eps}\right\}.
\end{equation}
Let $n$ be such that~\eqref{eq:no_log_moment_super_exp} holds. By~\eqref{eq:LO_additional_assumpt} and~\eqref{eq:no_log_moment_super_exp}, we have for every $z\in\C$ and $k<n$,
\begin{align*}
\left|w_{n-k}\xi_{n-k}\left(\frac z {e^{\beta} n^{\alpha}}\right)^{n-k}\right|
&\leq
C |w_n| e^{\beta k}n^{\alpha k} |\xi_n|^{\frac{n-k} n} \left|\frac z {e^{\beta} n^{\alpha}}\right|^{n-k}\\
&=
C \left|w_n \xi_n \left(\frac{z}{e^{\beta}n^{\alpha}}\right)^n\right|  (|\xi_n|^{\frac 1n} |z|)^{-k}.
\end{align*}
For every $z$ such that $|z|>\eps$ we obtain
\begin{align*}
\left|\sum_{k=1}^{n-1} w_{n-k}\xi_{n-k}\left(\frac z {e^{\beta} n^{\alpha}}\right)^{n-k}\right|
&\leq
C \left|w_n \xi_n \left(\frac{z}{e^{\beta}n^{\alpha}}\right)^n\right|
\cdot \left(\sum_{k=1}^{n-1} \frac 1{(3C+1)^{k}}\right)\\
&<
\frac 13 \left|w_n \xi_n \left(\frac{z}{e^{\beta}n^{\alpha}}\right)^n\right|.
\end{align*}
By~\eqref{eq:fk_asympt} and~\eqref{eq:no_log_moment_super_exp}, the right-hand side of this inequality goes to $+\infty$ as $n\to\infty$. For sufficiently large $n$, it is larger that $|\xi_0w_0|$. It follows that for $|z|>\eps$, the term of degree $n$ in the polynomial $\WWW_n(e^{-\beta}n^{-\alpha} z)$ is larger, in the sense of absolute value,  than the sum of all other terms.
Hence, the polynomial $\WWW_n(e^{-\beta}n^{-\alpha} z)$ has no zeros outside the disk $\DDD_{\eps}$.

\subsection{Proof of Theorem~\ref{theo:general_converse}} 
Start with a measure $\mu$ satisfying the assumptions of Theorem~\ref{theo:general_converse}. Define a function $I$ by $I(s)=\int_{-\infty}^{s}\mu(\DDD_{e^r})dr$ for $s<\log R_0$. The integral is finite by the second assumption of the theorem. Clearly, $I$ is non-decreasing, continuous and convex on $(-\infty, \log R_0)$. For $s>\log R_0$ let $I(s)=+\infty$. Define $I(\log R_0)$ by left continuity. Let now $u$ be defined as the Legendre--Fenchel transform of $I$:
$$
u(t)=\sup_{s\in\R} (st- I(s)).
$$
We claim that the random analytic function $\GGG_n(z)=\sum_{k=0}^{\infty} \xi_k f_{k,n} z^k$  with $f_{k,n}=e^{-n u(k/n)}$ satisfies assumptions (A1)--(A4) of Theorem~\ref{theo:general} with $f=e^{-u}$. By the Legendre--Fenchel duality, the function $u$ possesses the following properties. Firstly, it is convex and lower-semicontinuous. Secondly, it is finite on the interval $[0,T_0)$, where $T_0=\limsup_{t\to+\infty} I(t)/t$ satisfies $T_0\in (0,+\infty]$. This holds since  $I$ is non-decreasing and $\lim_{s\to -\infty} I(s)=0$ by construction.   Thirdly, $u(t)=+\infty$ for $t>T_0$ and $t<0$. This verifies assumption~(A1). Fourthly, formula~\eqref{eq:def_I} holds and $\lim_{t\to+\infty} u(t)/t=\log R_0$. This, together with Lemma~\ref{lem:log_moment}, shows that the convergence radius of $\GGG_n$ is $R_0$ a.s.\ and verifies assumption~(A4).  Finally, $u$ is continuous on $[0,T_0)$ (since it is convex and finite there), and, in the case $T_0<+\infty$, the function $u$ is left continuous  at $T_0$ (follows from the lower-semicontinuity of $u$). This verifies assumption~(A2). Assumption~(A3) holds trivially with $f=e^{-u}$.

\subsection{Proof of Proposition~\ref{prop:equilibrium}}
The logarithmic potential generated by the measure $\mu^{(\kappa)}$ is
$$
U^{(\kappa)}(z):=\int_{\C}\log \frac 1 {|z-w|}\mu^{(\kappa)}(dw).
$$
We will show that
\begin{equation}\label{eq:potential_mu_kappa}
U^{(\kappa)}(z)
= -
\begin{cases}
u(\kappa)-u(0), &|z|\leq e^{u'(0)},\\
I(\log |z|)+u(\kappa), &e^{u'(0)}\leq |z|\leq e^{u'(\kappa)}, \\
\kappa \log |z|, & |z|\geq e^{u'(\kappa)}.
\end{cases}
\end{equation}
Here, $u'(0)=\lim_{t\downarrow 0} u'(t)$. It follows from~\eqref{eq:potential_mu_kappa} that $F(z):=U^{(\kappa)}(z)+I(\log |z|)$ is constant and equal to $-u(\kappa)$ on the annulus $e^{u'(0)}\leq |z|\leq e^{u'(\kappa)}$ which contains the support of $\mu^{(\kappa)}$. For $|z|\geq e^{u'(\kappa)}$ we have $F(z)=I(\log |z|)-\kappa \log |z|\geq -u(\kappa)$ by~\eqref{eq:def_I}. Finally, for $|z|\leq e^{u'(0)}$ we have $F(z)=I(\log |z|)+u(0)-u(\kappa)\geq -u(\kappa)$ by~\eqref{eq:def_I}. To summarize, $F(z)$ is constant on the support of $\mu^{(\kappa)}$ and is at least as large as this constant outside the support.  By Theorem~3.3 on p.~44 of~\cite{saff_totik_book} this implies that $\mu^{(\kappa)}$ is the equilibrium measure for the logarithmic potential in the presence of external field $I(\log |z|)$. The theorem mentioned above is stated in~\cite{saff_totik_book} for $\kappa=1$ only, but it is valid for every $\kappa>0$.

It remains to prove~\eqref{eq:potential_mu_kappa}. The uniform probability distribution $\lambda_r$ on the boundary of $\DDD_r$ generates the potential
\begin{equation}\label{eq:potential_circle}
\int_{\partial \DDD_r} \log \frac 1 {|z-w|} \lambda_r(dw) = -\max \{ \log r, \log |z| \}.
\end{equation}
Write $F(r)=I'(\log r)$. Recall that $\mu^{(\kappa)}$ is a rotationally invariant measure of total mass $\kappa$ having support in the annulus $e^{u'(0)}\leq |z|\leq e^{u'(\kappa)}$ and such that $\mu^{(\kappa)}(\DDD_r)=F(r)$. Together with~\eqref{eq:potential_circle} this immediately implies~\eqref{eq:potential_mu_kappa} for $|z|\geq e^{u'(\kappa)}$. Suppose now that $|z|\leq e^{u'(0)}$. Using~\eqref{eq:potential_circle} and then integration by parts, we obtain
$$
-U^{(\kappa)}(z)
=
\int_{e^{u'(0)}}^{e^{u'(\kappa)}}\log |s| dF(s)
=
I'(r)r\Big|_{u'(0)}^{u'(\kappa)}-\int_{e^{u'(0)}}^{e^{u'(\kappa)}}I'(\log s)\frac{ds}{s}
=
u(\kappa)-u(0).
$$
Here, we used the identity $I(u'(t))=t u'(t)-u(t)$ for $t=0$ and $t=\kappa$. Suppose finally that $e^{u'(0)}\leq |z|\leq e^{u'(\kappa)}$. Using~\eqref{eq:potential_circle} we obtain
$$
-U^{(\kappa)}(z)
=
\int_{0}^{|z|} \log  |z| dF(s)+ \int_{|z|}^{e^{u'(\kappa)}}\log |s| dF(s)
=
I(\log |z|)+u(\kappa)
,
$$
where the first integral is equal to $F(z)\log |z|$ and integration by parts has been used for the second integral.
The proof of~\eqref{eq:potential_mu_kappa} is completed.

\bibliographystyle{plainnat}
\bibliography{littlewood_offord_bib}
\end{document}